\def\p{\partial}
\def\R{\mathbb{R}}
\def\vv<#1>{\langle#1\rangle}
\def\ol{\overline}
\def\1{\mathbf{1}}
\def\Comb{{\rm Comb}}
\def\XXint#1#2{\setbox0=\hbox{$#1{#2}{\int}$}{#2}\kern-.5\wd0 }
\def\XXint#1#2#3{{\setbox0=\hbox{$#1{#2#3}{\int}$}
     \vcenter{\hbox{$#2#3$}}\kern-.5\wd0}}
\def\vv<#1>{{\left\langle#1\right\rangle}}
\def\wh{\widehat}
\def\diam{{\rm diam}}
\def \St{{\rm St}}
\newtheorem{thm}{Theorem}[section]
\newtheorem{lem}{Lemma}[section]
\newtheorem{cor}{Corollary}[section]
\theoremstyle{definition}
\newtheorem{defn}{Definition}[section]
\theoremstyle{remark}
\newtheorem{rem}{Remark}[section]
\numberwithin{equation}{section}
\def \wt{\widetilde}
\def \spec{{\rm Spec}}
\begin{document}
\title{Monotonicity of Steklov eigenvalues on graphs and applications}

\author{Chengjie Yu$^1$}
\address{Department of Mathematics, Shantou University, Shantou, Guangdong, 515063, China}
\email{cjyu@stu.edu.cn}
\author{Yingtao Yu}
\address{Department of Mathematics, Shantou University, Shantou, Guangdong, 515063, China}
\email{18ytyu@stu.edu.cn}
\thanks{$^1$Research partially supported by GDNSF with contract no. 2021A1515010264 and NNSF of China with contract no. 11571215.}
\renewcommand{\subjclassname}{%
  \textup{2010} Mathematics Subject Classification}
\subjclass[2010]{Primary 05C50; Secondary 39A12}
\date{}
\keywords{Steklov eigenvalue, Monotonicity, graph}
\begin{abstract}
In this paper, we obtain monotonicity of Steklov eigenvalues on graphs which as a special case on trees extends the results of He-Hua [Calc. Var. Partial Differential Equations 61 (2022), no. 3, Paper No. 101, arXiv: 2103.07696] to higher Steklov eigenvalues and gives affirmative answers to two problems proposed in He-Hua [arXiv: 2103.07696]. As applications of the monotonicity of Steklov eigenvalues, we obtain some estimates for Steklov eigenvalues on trees generalizing the isodiametric estimate for the first positive Steklov eigenvalues on trees in He-Hua [arXiv:2011.11014].
\end{abstract}
\maketitle\markboth{C. Yu \& Y. Yu}{Monotonicity of Steklov eigenvalues}
\section{Introduction}
The Steklov eigenvalues and Steklov operators for bounded domains in Euclidean spaces  were first introduced by Steklov \cite{St} when considering the problem of liquid sloshing. Such kinds of notions were later found important both in applied and pure mathematics. In applied mathematics, Steklov operators are the original models for the problem of detecting the inside of a body by boundary measurements. In pure mathematics, Steklov eigenvalues and Steklov eigenfunctions were found to be deeply related to free boundary minimal submanifolds in Euclidean balls (\cite{FS}) and the Yamabe problem on Riemannian manifolds with boundary (\cite{Es}). The discrete version of Steklov operators and Steklov eigenvalues were recently introduced by Hua-Huang-Wang \cite{HHW} and Hassannezhad-Miclo \cite{HM} independently. There are quite a number of works on exploring the properties Steklov eigenvalues on graphs in recent years. See for examples \cite{HH,He-Hua1,He-Hua2,HHW2,Pe1,Pe2, SY1,SY2,SY3}.

In a recent interesting work \cite{He-Hua2}, He and Hua obtained a monotonicity of the first positive Steklov eigenvalues on trees equipped with the unit weight. More precisely, they showed that
\begin{equation}\label{eq-He-Hua}
\sigma_2(G_2)\leq \sigma_2(G_1)
\end{equation}
if $G_1$ is a nontrivial subtree of $G_2$. Here a nontrivial tree is regarded as a graph with boundary intrinsically by regarding its leaves (vertices of degree one) as boundary vertices. Such a monotonicity of Steklov eigenvalues generalizes the isodiametric estimate
\begin{equation}\label{eq-isodiametric}
\sigma_2(G)\leq \frac{2}{\diam(G)}
\end{equation}
obtained in another interesting work \cite{He-Hua1} by them, in a  significant  way. Here $G$ is also a tree and $\diam(G)$ means the diameter of $G$. To obtain \eqref{eq-He-Hua}, He and Hua established a theory of Steklov flows in \cite{He-Hua2} and as an application of their theory, they also characterized the rigidity of the isodiametric estimate \eqref{eq-isodiametric}. In their work \cite{He-Hua2}, He and Hua also proposed two interesting problems about extending \eqref{eq-He-Hua} to higher Steklov eigenvalues and to general graphs. In this paper, we give affirmative answers to the two problems. In fact, we obtained monotonicity of Steklov eigenvalues for general weighted graphs. Our method is different with that of He and Hua in \cite{He-Hua2}.

Before stating our main results, we first recall the notion of weighted graphs.
\begin{defn}
A triple $(G,m,w)$ is called a weighted graph where
\begin{enumerate}
\item $G$ is a simple graph with $V(G)$ and $E(G)$ its sets of vertices and edges respectively;
\item $m:V(G)\to \R^+$ is called the vertex-measure;
\item $w:E(G)\to \R^+$ is called the edge-weight.
\end{enumerate}
If $m\equiv 1$ and $w\equiv 1$, $(G,m,w)$ is called a graph with unit weight. For convenience, the edge-weight $w$ is conventionally viewed as a symmetric function on $V(G)\times V(G)$ by zero extension. For a subgraph of $G$, we consider it as a weighted graph by restricting the vertex-measure and edge-weight of $G$ to its sets of vertices and edges respectively.
\end{defn}
Next, we intorudce the notions of graphs with boundary and combinatorial graphs with boundary.
\begin{defn}
\begin{enumerate}
\item A pair $(G,B)$ is called a graph with boundary if $G$ is a simple graph and $B\subset V(G)$. The sets $B$ and $\Omega:=V(G)\setminus B$ are called the boundary and interior of $(G,B)$ respectively. We  will also denote $B$ and $\Omega$ as $B(G)$ and $\Omega(G)$ respectively if necessary.
\item A simple graph $G$ is called a combinatorial graph with boundary if $G$ is equipped with the unit weight and $B(G)=\{v\in V(G)\ |\deg v\leq 1\}$. For a nontrivial combinatorial tree $G$ with boundary,  $B(G)=L(G)$ where $L(G)$ is the collection of leaves in $G$.
\end{enumerate}
\end{defn}
Note that the notion of graphs with boundary above is more general than the definition in \cite{Pe1}. We don't require that $E(B,B)=\emptyset$ and each boundary vertex is adjacent to some interior vertices. We drop the two requirements because they are not necessary in our discussion and the path with two vertices can not be viewed as a combinatorial graph with boundary if we take the definition in \cite{Pe1}. In fact, Steklov eigenvalues of such kind of more general graphs with boundary was considered in \cite{CG14,CG18} when considering discretization of Riemannian manifolds.

Similarly as in the smooth case, for a weighted finite graph $(G,B,m,w)$ with boundary, one can define the Steklov operator $\Lambda:\R^B\to\R^B$ (See Section \ref{sec-pre} for details) which is nonnegative and self-adjoint. The eigenvalues of $\Lambda$ are called the Steklov eigenvalues of the graph and are denoted as
$$0=\sigma_1(G,B,m,w)\leq \sigma_2(G,B,m,w)\leq\cdots\leq\sigma_{|B|}(G,B,m,w).$$
For simplicity, we will also write $\sigma_i(G,B,m,w)$ as $\sigma_i$, $\sigma_i(G)$ or $\sigma_i(G,B)$ if the ignored information is clear in context. When $i>|B|$, we conventionally take $\sigma_i=+\infty$. For the trivial graph $G$ with only one vertex, we view the vertex as a boundary vertex by definition and hence $\sigma_1(G)=0$ and $\sigma_i(G)=+\infty$ for $i\geq 2$.

The key ingredient of this work is the following extension of the notion of comb in \cite{FR}.
\begin{defn}A connected graph $\wt G$ is called a comb over its connected subgraph $G$ if  the graph $\wt G$ will break into $|V(G)|$ connected components when all the edges of $G$ are deleted. $G$ is then called the base of the  comb.

Suppose $\wt G$ is a comb over $G$.
\begin{enumerate}
 \item Denote the connected component of $\wt G$ containing $x\in V(G)$ as $\wt G_x$ when all the edges of $G$ are deleted, and call $\wt G_x$  the tooth of the comb  at $x$.
 \item When $\wt G$ is a graph with boundary $\wt B$, define $\wt B_x:=\wt B\cap V(\wt G_x)$ for any $x\in V(G)$.
\end{enumerate}
\end{defn}
We are now ready to state the first main result of this paper.

\begin{thm}\label{thm-main}
Let $(\wt G,\wt B, m, w)$ be a weighted connected finite graph with boundary, and $(G,B)$ be such that $G$ is a connected subgraph of $\wt G$. Suppose that $\wt G$ is a  comb over $G$, and
\begin{equation}\label{eq-assumption-boundary}
m(\wt B_x)\geq m_x
\end{equation}
for any $x\in B$. Then, for any $i=1,2,\cdots,|B|$,
\begin{equation}\label{eq-mono-main}
\sigma_i(\wt G)\leq \sigma_i(G).
\end{equation}
Here $m(\wt B_x)$ means the total measure of the vertices in $\wt B_x$.
\end{thm}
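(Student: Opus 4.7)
The plan is to establish the inequality through the Courant--Fischer min-max characterization of Steklov eigenvalues, constructing a suitable $i$-dimensional test subspace on $\wt G$ from the first $i$ Steklov eigenfunctions on $G$. The starting point is the following variational bound, which I will apply to $\wt G$: for any weighted graph with boundary $(H, B(H), m, w)$, the Dirichlet principle gives
\begin{equation*}
\sigma_i(H) \;\leq\; \max_{u \in V,\ u|_{B(H)} \neq 0} \frac{D_H(u)}{\|u|_{B(H)}\|_{L^2(B(H),m)}^2}
\end{equation*}
whenever $V \subset \R^{V(H)}$ is a subspace whose restriction to $B(H)$ has dimension at least $i$; here $D_H(u) = \sum_{xy \in E(H)} w(xy)(u(x)-u(y))^2$ denotes the Dirichlet energy.

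Next I would take Steklov eigenfunctions $\phi_1,\dots,\phi_i$ on $G$ corresponding to $\sigma_1(G),\dots,\sigma_i(G)$, harmonic on $\Omega(G)$, with $\{\phi_j|_B\}$ orthonormal in $L^2(B,m)$ (so that $D_G(\phi_j,\phi_k) = \sigma_j(G)\delta_{jk}$). The natural pull-back to $\wt G$ is the function $\wt\phi_j : V(\wt G) \to \R$ defined to equal $\phi_j(x)$ on the entire tooth $\wt G_x$; this is well-defined because the teeth partition $V(\wt G)$, and $\wt\phi_j$ extends $\phi_j$. Because the comb property forces every edge of $\wt G$ to lie either in $E(G)$ or inside some tooth (along which every $u \in V := \mathrm{span}\{\wt\phi_1,\dots,\wt\phi_i\}$ is constant), I obtain $D_{\wt G}(u) = D_G(\phi)$ for every $u = \sum_j c_j \wt\phi_j$, where $\phi := \sum_j c_j\phi_j = u|_{V(G)}$.

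For the denominator, the partition $\wt B = \bigsqcup_{x \in V(G)} \wt B_x$ together with hypothesis \eqref{eq-assumption-boundary} yields
\begin{equation*}
\|u|_{\wt B}\|^2 \;=\; \sum_{x \in V(G)} \phi(x)^2 m(\wt B_x) \;\geq\; \sum_{x \in B} \phi(x)^2 m(x) \;=\; \|\phi|_B\|^2,
\end{equation*}
while the eigenvalue identity gives $D_G(\phi) = \sum_j c_j^2 \sigma_j(G) \leq \sigma_i(G) \|\phi|_B\|^2$. Hence the Rayleigh quotient on $V$ is bounded by $\sigma_i(G)$. To invoke the variational principle I still need injectivity of $u \mapsto u|_{\wt B}$ on $V$: if $u|_{\wt B} \equiv 0$ then $\phi(x) m(\wt B_x) = 0$ for every $x \in V(G)$, and \eqref{eq-assumption-boundary} forces $m(\wt B_x) \geq m(x) > 0$ for $x \in B$, so $\phi \equiv 0$ on $B$, whence all $c_j$ vanish by linear independence of $\{\phi_j|_B\}$. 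Applying the variational bound then delivers $\sigma_i(\wt G) \leq \sigma_i(G)$.

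The part I expect to require the most care is the initial formulation: one must phrase the min-max with non-harmonic extensions on $V(\wt G)$ (invoking the Dirichlet principle) so that the comb-constant extension qualifies as a test function, since the problem is posed in terms of the Steklov operator on $\wt B$, not on $V(\wt G)$. Once this formalism is in place, the comb structure is precisely the combinatorial condition matching Dirichlet energies across $G$ and $\wt G$, and hypothesis \eqref{eq-assumption-boundary} is exactly what is needed to dominate the boundary $L^2$-norm on $G$ by that on $\wt G$, so the proof collapses to the short calculation above.
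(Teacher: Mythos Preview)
Your proof is correct and follows essentially the same approach as the paper: both use the constant extension $\wt f$ of Steklov eigenfunctions of $G$ along the teeth as test functions and exploit that $\langle d\wt f,d\wt f\rangle_{\wt G}=\langle df,df\rangle_G$ together with $\langle\wt f,\wt f\rangle_{\wt B}\geq\langle f,f\rangle_B$ from \eqref{eq-assumption-boundary}. The only cosmetic difference is that the paper picks a single $f\in\mathrm{span}\{f_1,\dots,f_i\}$ orthogonal in $L^2(\wt B)$ to the first $i-1$ eigenfunctions of $\wt G$, whereas you use the equivalent subspace form of Courant--Fischer and verify injectivity of the restriction map---the underlying mechanism is identical.
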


Note that a tree is a  comb over its subtrees. So, Theorem \ref{thm-main} can be applied directly to trees. When $G$ is a nontrivial subtree of the finite tree $\wt G$, because each tooth of $\wt G$ as a  comb over $G$ is also a tree,
\begin{equation}
|L(\wt G)\cap V(\wt G_x)|\geq 1
\end{equation}
for any $x\in L(G)$. This means that the assumption \eqref{eq-assumption-boundary} in Theorem \ref{thm-main} will be automatically satisfied on finite combinatorial trees. Therefore, we have the following corollary for combinatorial trees with boundary which gives an affirmative answer to Problem 1.5 in \cite{He-Hua2}.
\begin{cor}
Let $\wt G$ be a finite combinatorial tree with boundary and $G$ be a nontrivial subtree of $\wt G$ which is also viewed as a combinatorial tree with boundary. Then,
\begin{equation}
\sigma_i(\wt G)\leq \sigma_i(G)
\end{equation}
for $i=1,2,\cdots, |L(G)|$.
\end{cor}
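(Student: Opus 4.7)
The plan is to deduce the corollary as a direct application of Theorem \ref{thm-main} to the combinatorial graph $(\wt G,L(\wt G))$ with base graph $G$ and unit weight. Two hypotheses must be verified: that $\wt G$ is a comb over $G$, and that $m(\wt B_x)\geq m_x = 1$ for every $x\in B(G)=L(G)$.

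For the comb structure, I would argue as follows. Since $G$ is a connected subtree of the tree $\wt G$, the unique $\wt G$-path between any two vertices of $V(G)$ must already lie inside $G$. Hence deleting the edges of $G$ from $\wt G$ separates every pair of distinct vertices of $V(G)$, so the connected components of the resulting subgraph are in bijection with $V(G)$. This is exactly the defining property of a comb, and it identifies the tooth $\wt G_x$ for each $x\in V(G)$.

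The boundary condition is the main nontrivial step. Fix $x\in L(G)$ and consider the tooth $\wt G_x$, which is itself a subtree of $\wt G$. I would first verify $V(\wt G_x)\cap V(G)=\{x\}$: otherwise some $y\in V(G)\setminus\{x\}$ would lie in $\wt G_x$, and concatenating the $\wt G_x$-path from $x$ to $y$ (which avoids $E(G)$) with the $G$-path from $y$ back to $x$ would yield a cycle in $\wt G$, contradicting that $\wt G$ is a tree. Next I split into cases on $|V(\wt G_x)|$. If $V(\wt G_x)=\{x\}$, then $x$ is incident to no $\wt G$-edge outside $G$, so $\deg_{\wt G}(x)=\deg_G(x)=1$ and $x\in L(\wt G)\cap V(\wt G_x)$. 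Otherwise $\wt G_x$ is a tree with at least two vertices, hence at least two leaves; picking any leaf $y$ of $\wt G_x$ with $y\neq x$, the claim above gives $y\notin V(G)$, so $\deg_{\wt G}(y)=\deg_{\wt G_x}(y)=1$ and $y\in L(\wt G)\cap V(\wt G_x)$. In either case $|\wt B_x|\geq 1=m_x$, and Theorem \ref{thm-main} immediately yields $\sigma_i(\wt G)\leq \sigma_i(G)$ for $i=1,\ldots,|L(G)|$. I expect no serious obstacle here: the whole argument is a clean reduction to Theorem \ref{thm-main}, and the only slightly delicate point is the short cycle argument isolating $V(\wt G_x)\cap V(G)=\{x\}$, which is what forces each tooth to contribute at least one leaf of $\wt G$.
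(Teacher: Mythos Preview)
Your proposal is correct and follows essentially the same approach as the paper: reduce directly to Theorem~\ref{thm-main} by checking that a tree is a comb over any subtree and that each tooth contributes at least one leaf of $\wt G$. Your argument is in fact more detailed than the paper's brief sketch---in particular your case split on $|V(\wt G_x)|$ and the observation that a leaf $y\neq x$ of the tooth lies outside $V(G)$ and hence satisfies $\deg_{\wt G}(y)=\deg_{\wt G_x}(y)=1$ makes explicit a step the paper leaves implicit.
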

More generally, we can extend the result above to combinatorial graphs with boundary which gives an affirmative answer to Problem 1.6 in \cite{He-Hua2}.
\begin{cor}\label{cor-mono-graph}
Let $\wt G$ be a connected finite combinatorial graph with boundary and $G$ be a connected nontrivial subgraph of $\wt G$ which is also viewed as the a combinatorial graph with boundary. Suppose that the inclusion map from $G$ to $\wt G$ induces an isomorphism on the fundamental groups of $G$ and $\wt G$. Then
\begin{equation}
\sigma_i(\wt G)\leq \sigma_i(G)
\end{equation}
for all $i=1,2,\cdots,|B(G)|$.
\end{cor}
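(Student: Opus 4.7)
The plan is to reduce Corollary \ref{cor-mono-graph} to Theorem \ref{thm-main} by verifying its two hypotheses: that $\wt G$ is a comb over $G$, and that $m(\wt B_x)\geq m_x$ for every $x\in B(G)$. Since the graphs are combinatorial, $m\equiv 1$, so the second condition reduces to $|\wt B_x|\geq 1$.

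First I would extract the comb structure from the fundamental group hypothesis. For a connected graph $\pi_1$ is free of rank $|E|-|V|+1$, so the isomorphism implies $|E(\wt G)\setminus E(G)|=|V(\wt G)\setminus V(G)|$. Let $H$ be the subgraph of $\wt G$ with vertex set $V(\wt G)$ and edge set $E(\wt G)\setminus E(G)$, and let $c$ denote its number of connected components. Since every edge of $G$ joins two vertices of $V(G)$, any component of $H$ disjoint from $V(G)$ would remain a separate component of $\wt G$; the connectedness of $\wt G$ therefore forces every component of $H$ to meet $V(G)$, giving $c\leq |V(G)|$. On the other hand the cycle rank of $H$ equals $|E(H)|-|V(H)|+c=c-|V(G)|\geq 0$, so $c\geq |V(G)|$. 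Hence $c=|V(G)|$, which forces $H$ to be a forest with each component containing exactly one vertex of $V(G)$. This is precisely the comb structure of $\wt G$ over $G$, with tooth $\wt G_x$ equal to the component of $H$ containing $x$.

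Next I would check $|\wt B_x|\geq 1$ for each $x\in B(G)$, i.e. each $x\in V(G)$ with $\deg_G x\leq 1$. If $\wt G_x=\{x\}$, then no edges outside $E(G)$ are incident to $x$, so $\deg_{\wt G} x=\deg_G x\leq 1$ and $x\in\wt B_x$. If $\wt G_x$ has at least two vertices, then it is a tree and hence has at least two leaves; any leaf $u\neq x$ of $\wt G_x$ lies in $V(\wt G)\setminus V(G)$, so all of its $\wt G$-edges are edges of $\wt G_x$, giving $\deg_{\wt G} u=1$ and hence $u\in\wt B_x$. Theorem \ref{thm-main} then yields $\sigma_i(\wt G)\leq \sigma_i(G)$ for $i=1,\ldots,|B(G)|$. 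I expect the main obstacle to be the first step, turning the fundamental-group isomorphism into the explicit comb description through the cycle-rank count; once the comb structure is available the boundary condition follows from a short case analysis.
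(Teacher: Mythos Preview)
Your proposal is correct and follows the same reduction to Theorem \ref{thm-main} as the paper. The paper simply asserts that the $\pi_1$-isomorphism forces every cycle of $\wt G$ to lie in $G$, hence $\wt G$ is a comb over $G$ with tree teeth, and then invokes the tree structure to get $|\wt B_x|\geq 1$; your Betti-number count and leaf analysis spell out these two steps in detail but are otherwise the same argument.
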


The second part of this paper is devoted to consider the rigidity of \eqref{eq-mono-main} in Theorem \ref{thm-main}. In the following, the meanings of the symbols not mentioned before can be found in Section \ref{sec-pre}.

We first have the following general rigidity when \eqref{eq-mono-main} holds for all $i=1,2,\cdots,|B|$.
\begin{thm}\label{thm-main-rigidity}
Let $(\wt G,\wt B, m, w)$ be a weighted connected finite graph with boundary and $(G,B)$ be such that $G$ is a connected subgraph of $\wt G$ and $|B|\geq 2$. Suppose that $\wt G$ is a comb over $G$ and $$m(\wt B_x)\geq m_x$$ for any $x\in B$. Then, the equality of \eqref{eq-mono-main} holds for all $i=1,2,\cdots, |B|$, if and only if all the following statements are true:
\begin{enumerate}
\item for any $x\in B$, $\wt B_x=\{x\}$;
\item for any $y\in \Omega\setminus Z$, $\wt B_y=\emptyset$;
\item for any $u\in \R^{V(\wt G)}$ with $u|_B$ being constant and $\vv<u,1>_{\wt B}=0$,
\begin{equation}
\vv<du,du>_{\wt G}\geq \sigma_{|B|}(G)\vv<u,u>_{\wt B}.
\end{equation}
\end{enumerate}
Here $\Omega$ is the interior of $G$ and
\begin{equation*}
Z=\{x\in \Omega\ |\ f(x)=0,\ \forall f\in \R^{V(G)}\mbox{ with } (\Delta_Gf)\big|_\Omega\equiv 0\mbox{ and }\vv<f,1>_{B}=0.\}
\end{equation*}
In particular, if $Z=\emptyset$, then the equality of \eqref{eq-mono-main} holds for all $i=1,2,\cdots, |B|$, if and only if $\wt B=B$.
\end{thm}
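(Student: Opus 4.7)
My plan is to sharpen the proof of Theorem~\ref{thm-main} into a quantitative trace identity, which delivers both directions of the rigidity.

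I will fix an orthonormal basis $\phi_1,\dots,\phi_{|B|}$ of Steklov eigenfunctions of $(G,B,m,w)$ with eigenvalues $\sigma_1(G)\le\cdots\le\sigma_{|B|}(G)$, and let $\wt\phi_i\in\R^{V(\wt G)}$ denote the constant-on-teeth extension $\wt\phi_i(v)=\phi_i(x)$ for $v\in V(\wt G_x)$. The key objects are three $|B|\times|B|$ matrices: $D=\mathrm{diag}(\sigma_i(G))$, $M_{ij}=\vv<\wt\phi_i|_{\wt B},\wt\phi_j|_{\wt B}>_{\wt B}$, and $A_{ij}=\vv<du_i,du_j>_{\wt G}$ with $u_i$ the harmonic extension of $\wt\phi_i|_{\wt B}$ in $\wt G$. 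The Dirichlet principle yields $A\preceq D$, and the hypothesis $m(\wt B_x)\ge m_x$ yields $M\succeq I$; Ky~Fan's trace inequality then chains these into
\begin{equation*}
\sum_{i=1}^{|B|}\sigma_i(\wt G)\le\tr(M^{-1}A)\le\tr(M^{-1}D)=\sum_i(M^{-1})_{ii}\sigma_i(G)\le\sum_i\sigma_i(G),
\end{equation*}
which is the summed form of the monotonicity. Under the assumed equality of all eigenvalues, all three links are tight: tightness of the middle one forces $A=D$, hence $u_i=\wt\phi_i$ and each $\wt\phi_i$ is already harmonic on $\Omega(\wt G)$; tightness of the last forces $(M^{-1})_{ii}=1$ for $i\ge 2$, and together with $M\succeq I$ this yields $Me_i=e_i$ for $i\ge2$.

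Expanding $M_{ii}=1$ ($i\ge2$) as the nonnegative sum $\sum_{x\in B}(m(\wt B_x)-m_x)\phi_i(x)^2+\sum_{y\in\Omega}m(\wt B_y)\phi_i(y)^2=0$ and using $|B|\ge2$ (so that $\phi_2|_B,\dots,\phi_{|B|}|_B$ span the $B$-orthogonal complement of the constants), I will find, for each $x\in B$ and each $y\in\Omega\setminus Z$, some $i\ge2$ with $\phi_i(x)\ne0$ or $\phi_i(y)\ne0$, whence $m(\wt B_x)=m_x$ on $B$ and $\wt B_y=\emptyset$ on $\Omega\setminus Z$, which is condition~(2). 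For~(1), harmonicity of $\wt\phi_i$ at any hypothetical $x\in B\cap\Omega(\wt G)$ combined with the Steklov identity $\Delta_G\phi_i(x)=-\sigma_i(G)\phi_i(x)/m_x$ would force $\phi_i(x)=0$ for every $i\ge2$, which I rule out by $|B|\ge2$; hence $B\subset\wt B$, and $m(\wt B_x)=m_x$ upgrades to $\wt B_x=\{x\}$ by positivity of $m$. For~(3), I will identify the orthogonal complement of $\mathrm{span}(\wt\phi_i|_{\wt B})$ in $\R^{\wt B}$ as $\{g\in\R^{\wt B}: g|_B\ \text{constant and}\ \vv<g,1>_{\wt B}=0\}$, using that $\wt\phi_i|_{\wt B}$ is supported on $B$ for $i\ge2$ since $\phi_i(y)=0$ for $y\in Z$, and then invoke $\sigma_{|B|+1}(\wt G)\ge\sigma_{|B|}(\wt G)=\sigma_{|B|}(G)$ together with $\|du\|^2_{\wt G}\ge\|du_g\|^2_{\wt G}$ for the harmonic extension of $g=u|_{\wt B}$.

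For the reverse direction, I will verify directly that under (1) and (2) a case-by-case computation of $\Lambda_{\wt G}\wt\phi_i$ at each vertex of $\wt B$ (splitting into vertices of $B$ and vertices of $\wt B_y$ for $y\in Z$) shows $\wt\phi_i$ is a Steklov eigenfunction of $\wt G$ with eigenvalue $\sigma_i(G)$; then~(3) with the same orthogonal-complement description forces that no smaller Steklov eigenvalue of $\wt G$ lies below $\sigma_{|B|}(G)$, giving $\sigma_i(\wt G)=\sigma_i(G)$ for $i=1,\dots,|B|$. The main obstacle lies in the forward direction: the Ky~Fan plus Dirichlet trace manipulation must simultaneously carry the harmonicity statement (which produces~(1)) and the mass-norm statement (which produces~(2)) in a single tight chain, and the $|B|\ge2$ hypothesis is essential at the two parity steps forbidding $\phi_i(x)=0$ or $\phi_i(y)=0$ for every $i\ge2$.
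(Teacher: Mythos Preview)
Your approach is correct and takes a genuinely different route from the paper's.

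The paper sharpens the \emph{inductive min-max} argument of Theorem~\ref{thm-main}: it builds functions $g_1\equiv 1,g_2,\dots,g_{|B|}$ one at a time, each $g_i$ a combination of the first $i$ eigenfunctions of $G$ chosen $\wt B$-orthogonal to the earlier $\wt g_j$'s, and reads off from equality in the single Rayleigh chain
\[
\sigma_i(\wt G)\le \frac{\vv<d\wt g_i,d\wt g_i>_{\wt G}}{\vv<\wt g_i,\wt g_i>_{\wt B}}\le\frac{\vv<dg_i,dg_i>_G}{\vv<g_i,g_i>_B}\le\sigma_i(G)
\]
that $\wt g_i$ is already a Steklov eigenfunction of $\wt G$ and that the ``excess mass'' terms vanish. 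Your argument instead fixes an orthonormal eigenbasis once and for all, packages the same two obstructions (Dirichlet defect and boundary-mass excess) into the matrix inequalities $A\preceq D$ and $M\succeq I$, and detects rigidity through tightness in the trace chain $\sum\sigma_i(\wt G)\le\tr(M^{-1}A)\le\tr(M^{-1}D)\le\sum\sigma_i(G)$. The paper's route is more elementary (no PSD calculus, no Cauchy interlacing) and delivers the full eigenfunction property of $\wt g_i$ in a single stroke; yours is more structural, avoids the somewhat ad hoc inductive construction, and makes transparent why the two conditions~(1) and~(2) arise from two separate equalities ($A=D$ and $Me_i=e_i$). Both converge on the same verification for~(3) via the invariant splitting $\R^{\wt B}=\mathrm{span}(\wt\phi_i|_{\wt B})\oplus(\cdot)^\perp$.

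Two small points: your displayed Steklov identity should read $\Delta_G\phi_i(x)=-\sigma_i(G)\phi_i(x)$ (no $1/m_x$, since the Laplacian already carries that factor); and what you call ``Ky~Fan'' is really just $\tr(PQ)\ge 0$ for $P,Q\succeq 0$ together with $(M^{-1})_{ii}\le 1$ from $M^{-1}\preceq I$---both standard, but not what usually goes by that name.
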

The geometric meaning of (3) in Theorem \ref{thm-main-rigidity} is unclear. As a corollary of Theorem \ref{thm-main-rigidity}, we have the following geometric necessary condition for the rigidity when the equalities of \eqref{eq-mono-main} holds for $i=1,2,\cdots, |B|$.

\begin{thm}\label{thm-rigidity-Geom}
Let $(\wt G,\wt B, m, w)$ be a weighted connected finite graph with boundary and $(G,B)$ be such that $G$ is a connected subgraph of $\wt G$ with $|B|\geq 2$. Suppose that $\wt G$ is a  comb over $G$ and $$m(\wt B_x)\geq m_x$$ for any $x\in B$. Moreover, suppose the equalities of \eqref{eq-mono-main} hold  for $i=1,2,\cdots, |B|$. Then, for any $z\in Z$,
\begin{equation}
\lambda_1(\wt G_z,\wt B_z\setminus\{z\},\{z\})\geq \sigma_{|B|}(G).
\end{equation}
Here $Z$ is the same as in  Theorem \ref{thm-main-rigidity} and $\lambda_1$ means the first Steklov eigenvalues with vanishing Dirichlet boundary data (See Section \ref{sec-pre} for details).
\end{thm}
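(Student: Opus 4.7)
The plan is to deduce this from Theorem~\ref{thm-main-rigidity}, whose hypothesis coincides with the present one; in particular conditions~(1), (2), (3) of that theorem hold, and (3) will be the central tool. Fix $z\in Z$. By the variational characterization of $\lambda_1$, it suffices to verify that for every $\phi\in\R^{V(\wt G_z)}$ with $\phi(z)=0$ and $\phi\not\equiv 0$,
\begin{equation*}
\langle d\phi,d\phi\rangle_{\wt G_z} \ \geq\ \sigma_{|B|}(G)\,\langle\phi,\phi\rangle_{\wt B_z\setminus\{z\}}.
\end{equation*}
The first step is to lift $\phi$ to $\tilde u\in\R^{V(\wt G)}$ by setting $\tilde u=\phi$ on $V(\wt G_z)$ and $\tilde u=0$ elsewhere. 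Conditions~(1) and~(2) of Theorem~\ref{thm-main-rigidity} give the disjoint decomposition $\wt B = B\sqcup\bigsqcup_{z'\in Z}\wt B_{z'}$, and since $z\in\Omega$ one has $B\cap V(\wt G_z)=\emptyset$. Combined with $\phi(z)=0$, this produces three clean identities: $\tilde u|_B\equiv 0$ is constant; $\langle d\tilde u,d\tilde u\rangle_{\wt G}=\langle d\phi,d\phi\rangle_{\wt G_z}$, because each edge of $G$ incident to $z$ connects $\tilde u(z)=0$ to $\tilde u(\cdot)=0$; and $\langle\tilde u,\tilde u\rangle_{\wt B}=\langle\phi,\phi\rangle_{\wt B_z\setminus\{z\}}$.

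The obstruction is the mean-zero constraint $\langle\tilde u,1\rangle_{\wt B}=\sum_{v\in\wt B_z\setminus\{z\}}\phi(v)m_v$ required by condition~(3), which in general does not vanish. To enforce it, I would add a correcting term $s\xi$, where $\xi\in\R^{V(\wt G)}$ is supported on $V(\wt G)\setminus V(\wt G_z)$ (so in particular $\xi(z)=0$), has $\xi|_B$ constant, and satisfies $\langle\xi,1\rangle_{\wt B}\neq 0$; then choose $s$ so that $\langle\tilde u+s\xi,1\rangle_{\wt B}=0$. Disjointness of supports wipes out all cross terms in both the energy and the $\wt B$-inner product, and condition~(3) applied to $u:=\tilde u+s\xi$ collapses to
\begin{equation*}
\langle d\phi,d\phi\rangle_{\wt G_z}-\sigma_{|B|}(G)\langle\phi,\phi\rangle_{\wt B_z\setminus\{z\}}\ \geq\ s^2\bigl(\sigma_{|B|}(G)\langle\xi,\xi\rangle_{\wt B}-\langle d\xi,d\xi\rangle_{\wt G}\bigr).
\end{equation*}

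The main obstacle is then to produce such a $\xi$ whose Rayleigh quotient satisfies $\langle d\xi,d\xi\rangle_{\wt G}\leq\sigma_{|B|}(G)\langle\xi,\xi\rangle_{\wt B}$; once this is achieved the right-hand side above is nonnegative and the proof is done. A natural candidate is the Dirichlet-harmonic function on $V(\wt G)\setminus V(\wt G_z)$ with boundary values $1$ on $B$ and $0$ on the interface $\{y\in V(G)\setminus\{z\}:zy\in E(G)\}$, extended by $0$ to $V(\wt G_z)$. Since the rigidity hypothesis gives $\sigma_{|B|}(\wt G)=\sigma_{|B|}(G)$, and any harmonic extension on $\wt G$ has Rayleigh quotient at most $\sigma_{|B|}(\wt G)$, the hardest step will be to leverage the hypothesis $z\in Z$, which pins down the harmonic structure of $G$ at $z$, in order to confirm the Rayleigh-quotient bound for this specific $\xi$.
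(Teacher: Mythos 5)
Your reduction to condition (3) of Theorem \ref{thm-main-rigidity} is exactly the paper's strategy: the paper also tests (3) with $u=\ol\varphi+c\wt f$, where $\ol\varphi$ is the zero extension of a function on the tooth $\wt G_z$ vanishing at $z$ and $\wt f$ is a correction supported off $V(\wt G_z)$ with $\wt f|_B$ constant, and all the disjoint-support cancellations you record are correct. The genuine gap is the one you yourself flag at the end: you never produce a $\xi$ satisfying $\vv<d\xi,d\xi>_{\wt G}\leq\sigma_{|B|}(G)\vv<\xi,\xi>_{\wt B}$, and the justification you sketch for your candidate does not work. The bound ``any harmonic extension on $\wt G$ has Rayleigh quotient at most $\sigma_{|B|}(\wt G)$'' is false as stated: the maximal Rayleigh quotient over harmonic extensions is $\sigma_{|\wt B|}(\wt G)$, and under conditions (1)--(2) one has $\wt B=B\sqcup\bigsqcup_{z'\in Z}\wt B_{z'}$, so $|\wt B|>|B|$ precisely when some tooth over $Z$ carries boundary vertices --- which is the only case at issue. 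Moreover your $\xi$ is not a harmonic extension on $\wt G$ anyway, since it is forced to vanish at interior interface vertices, and it is not even well defined when a neighbour of $z$ in $G$ happens to lie in $B$ (it would have to be both $0$ and $1$ there).

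The paper's missing ingredient is a different correction whose Rayleigh quotient can actually be computed: take $f$ solving $\Delta_Gf=0$ on $\Omega\setminus Z$, $f=1$ on $B$, $f=0$ on $Z$, and set $\xi=\wt f$ (its constant extension, which vanishes on every tooth over $Z$, hence on $V(\wt G_z)$). Since the eigenfunctions $g_2,\dots,g_{|B|}$ from the proof of Theorem \ref{thm-main-rigidity} vanish on $Z$ and $\vv<f,g_i>_B=\vv<1,g_i>_B=0$, Green's formula gives $\vv<\frac{\p f}{\p n},g_i>_B=\sigma_i(G)\vv<f,g_i>_B=0$ for $i\geq 2$, so $\frac{\p f}{\p n}\big|_B$ is constant and $f$ is an eigenfunction of the DtN map with vanishing Dirichlet data on $Z$; as $f|_B\equiv 1>0$, Lemma \ref{lem-DtN-dirichlet} forces it to be a \emph{first} eigenfunction, whence $\vv<df,df>_G=\lambda_1(G,B,Z)\vv<f,f>_B\leq\sigma_2(G)\vv<f,f>_B\leq\sigma_{|B|}(G)\vv<f,f>_B$, the middle inequality because $g_2$ itself is admissible in the variational characterization \eqref{eq-lambda1} of $\lambda_1(G,B,Z)$. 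With this $\xi$ the right-hand side of your final display is nonnegative and your computation closes; without it, the proof is incomplete.
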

It seems that the converse of the above theorem is not true and it is difficult to obtain satisfied necessary and sufficient condition for equalities of \eqref{eq-mono-main} to hold for all $i=1,2,\cdots, |B|$. Motivated by He-Hua's work \cite{He-Hua2}, we then consider rigidity for $\sigma_2(G)=\sigma_2(\wt G)$.
\begin{thm}\label{thm-main-rigidity-2}
Let $(\wt G,\wt B, m, w)$ be a weighted connected finite graph with boundary and $(G,B)$ be such that $G$ is a connected subgraph of $\wt G$ with $|B|\geq 2$. Suppose that $\wt G$ is a comb over $G$ and $$m(\wt B_x)\geq m_x$$ for any $x\in B$. Moreover, suppose $\sigma_2(\wt G)=\sigma_2(G)$ and $Z_1\subset \Omega$ where
$$Z_1=\{x\in V(G)\ |\  f(x)=0 \mbox{ for any eigenfunction
$f\in \R^{V(G)}$ of $\sigma_2(G)$}\}.$$
Then,
\begin{enumerate}
\item for any $x\in B$, $\wt B_x=\{x\}$;
\item for any $y\in \Omega\setminus Z_1$, $\wt B_y=\emptyset$;
\item for any $z\in Z_1$, $\lambda_1(\wt G_z,\wt B_z\setminus\{z\}, \{z\})\geq \sigma_2( G)$.
\end{enumerate}
Here $\Omega$ is the interior of $G$.
\end{thm}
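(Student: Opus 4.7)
The plan is to imitate the variational proof of Theorem \ref{thm-main}, tracking equality cases carefully and exploiting that $Z_1$ records the common vanishing set of all $\sigma_2(G)$-eigenfunctions. For every $f$ in the $\sigma_2(G)$-eigenspace $V_G$, define the lift $\tilde f\in\R^{V(\wt G)}$ by $\tilde f(y)=f(x)$ whenever $y\in V(\wt G_x)$. Because $\tilde f$ is constant on each tooth, one computes at once
\[
\langle d\tilde f,d\tilde f\rangle_{\wt G}=\langle df,df\rangle_G=\sigma_2(G)\|f\|_B^2,\qquad \|\tilde f\|_{\wt B}^2=\sum_{x\in V(G)}f(x)^2 m(\wt B_x).
\]
Set $c_f=\langle\tilde f,1\rangle_{\wt B}/m(\wt B)$ and $\phi_f=\tilde f-c_f$, which is automatically orthogonal to $1$ on $\wt B$. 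Writing $\|\phi_f\|_{\wt B}^2=\sum_x m(\wt B_x)(f(x)-c_f)^2$ as a weighted variance and using the hypothesis $m(\wt B_x)\geq m_x$ on $B$ together with $\langle f,1\rangle_B=0$, I get $\|\phi_f\|_{\wt B}^2\geq\|f\|_B^2+c_f^2 m(B)$, hence $R_{\wt G}(\phi_f)\leq\sigma_2(G)$. Plugging into $\sigma_2(\wt G)=\min\{R_{\wt G}(\phi):\phi\perp 1\text{ on }\wt B\}$ and using $\sigma_2(\wt G)=\sigma_2(G)$ forces equality throughout, giving (a) $c_f=0$ (so $\phi_f=\tilde f$), (b) $m(\wt B_x)=m_x$ or $f(x)=0$ for every $x\in B$ and $m(\wt B_x)=0$ or $f(x)=0$ for every $x\in\Omega$, and (c) $\tilde f$ is harmonic on $V(\wt G)\setminus\wt B$, so $\tilde f$ is a genuine $\sigma_2(\wt G)$-eigenfunction.

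Items (1) and (2) would then follow by running the Steklov eigenvalue equation $-\Delta_{\wt G}\tilde f(y)=\sigma_2(G)\tilde f(y)$ at every $y\in\wt B$, together with the tooth computations $\Delta_{\wt G}\tilde f(x)=\Delta_G f(x)$ for $x\in V(G)$ and $\Delta_{\wt G}\tilde f(y)=0$ for $y\in V(\wt G_x)\setminus V(G)$. At a non-anchor vertex $y\in\wt B\cap(V(\wt G_x)\setminus V(G))$ the equation reduces to $0=\sigma_2(G)f(x)$, so letting $f$ range over $V_G$ forces $x\in Z_1$; similarly, at $y=x\in\Omega\cap\wt B$ one gets $x\in Z_1$. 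Under the standing hypothesis $Z_1\subset\Omega$, this yields $\wt B_x=\emptyset$ for every $x\in\Omega\setminus Z_1$, which is (2). For $x\in B$ one has $x\notin Z_1$, and the eigenvalue equation at $x$ combined with $\Delta_G f(x)=-\sigma_2(G)f(x)\neq 0$ for some $f\in V_G$ forces $x\in\wt B$; the non-anchor analysis applied inside $\wt G_x$ kills every other candidate in $\wt B_x$, yielding (1).

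For (3), fix $z\in Z_1$ and suppose for contradiction that $\lambda:=\lambda_1(\wt G_z,\wt B_z\setminus\{z\},\{z\})<\sigma_2(G)$. Let $u:V(\wt G_z)\to\R$ with $u(z)=0$ realize $\lambda$ and let $\bar u\in\R^{V(\wt G)}$ be its extension by zero. Because $z\in Z_1$, every lift $\tilde f$ for $f\in V_G$ vanishes on $V(\wt G_z)$, so the supports of $\bar u$ and $\tilde f$ are disjoint both in $V(\wt G)$ and in $\wt B$; in particular $\langle d\bar u,d\tilde f\rangle_{\wt G}=0$ and $\langle\bar u,\tilde f\rangle_{\wt B}=0$. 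With $\hat u=\bar u-\langle\bar u,1\rangle_{\wt B}/m(\wt B)$ and $\tilde f\perp 1$ from the first step, the two-parameter test family $v_{s,t}=s\tilde f+t\hat u$ is orthogonal to $1$ on $\wt B$, and its Rayleigh quotient has the block-diagonal form
\[
R_{\wt G}(v_{s,t})=\frac{s^2\sigma_2(G)\|f\|_B^2+t^2\lambda\|u\|^2}{s^2\|f\|_B^2+t^2\bigl(\|u\|^2-S_u^2/m(\wt B)\bigr)},
\]
where $S_u=\sum_{y\in\wt B_z\setminus\{z\}}u(y)m_y$ and $\|\tilde f\|_{\wt B}^2=\|f\|_B^2$ by (b) above. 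As a rational function of $s^2/t^2$, this interpolates between $\sigma_2(G)$ (as $t\to 0$) and $\lambda\|u\|^2/(\|u\|^2-S_u^2/m(\wt B))$ (as $s\to 0$).

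The main obstacle is the Cauchy--Schwarz correction $S_u^2/m(\wt B)$: when $S_u\neq 0$ the latter limit may exceed $\sigma_2(G)$, and the naive convex-combination argument fails. To handle this case I would replace $\bar u$ by a refined piecewise extension that agrees with $u$ on $V(\wt G_z)$ and equals a suitable constant $c$ on $V(\wt G)\setminus V(\wt G_z)$, which contributes only the extra Dirichlet energy $c^2 W_z$ with $W_z=\sum_{w\sim_G z}w_{zw}$ the weighted $G$-degree of $z$, and use the structural decomposition $\wt B=B\sqcup\bigsqcup_{z'\in Z_1}\wt B_{z'}$ from (1) and (2) to arrange the orthogonality-to-$1$ condition at smaller price. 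Combining this modified extension with the lifts $\tilde f$ and optimizing the remaining free parameters should produce $v\perp 1$ on $\wt B$ with $R_{\wt G}(v)<\sigma_2(G)$, contradicting $\sigma_2(\wt G)=\sigma_2(G)$. Showing that the piecewise-constant gain genuinely dominates the energetic cost $c^2 W_z$ is the delicate calculation that closes the argument.
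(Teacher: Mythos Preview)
Your treatment of parts (1) and (2) is essentially the paper's argument: show that for every $\sigma_2(G)$-eigenfunction $f$ the constant extension $\wt f$ is a genuine $\sigma_2(\wt G)$-eigenfunction, then read off the constraints from the Steklov equation at each $y\in\wt B$. That part is fine.

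The gap is in part (3). You correctly locate the obstacle---the correction term $S_u^2/m(\wt B)$ arising when you center $\bar u$---but your proposed fix (replacing $\bar u$ by a piecewise extension equal to a constant $c$ off $\wt G_z$) does not close. That extension introduces energy $c^2W_z$ with $W_z=\sum_{w\sim_G z}w_{zw}$, and the orthogonality constraint forces $c=-S_u/M$ with $M=m(\wt B)-m(\wt B_z)$; there is no a priori relation between $W_z$, $M$, and $\sigma_2(G)$ that guarantees $(\sigma_2(G)-\lambda)\|u\|^2>c^2(W_z-\sigma_2(G)M)$ in general. Throwing in $s\wt f$ does not help either, since $d\wt f$ and the gradient of your piecewise extension now overlap on the $G$-edges through $z$, producing an uncontrolled cross term $2sc\sum_{w\sim_G z}w_{zw}f(w)$.

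The paper avoids this entirely with a single trick: instead of pairing $\bar\varphi$ with the eigenfunction lift $\wt f$, pair it with $\wt g$ where $g=|f|$. Two things happen. First, $\vv<\wt g,1>_{\wt B}=\vv<|f|,1>_B>0$ (using (1), (2), and $g|_{Z_1}=0$), so one can simply choose a scalar $c$ with $\vv<\bar\varphi+c\wt g,1>_{\wt B}=0$; no constant needs to be subtracted, and no correction term appears. Second, $g(z)=|f(z)|=0$, so $\wt g$ vanishes on all of $V(\wt G_z)$, and both the Dirichlet and boundary cross terms between $\bar\varphi$ and $\wt g$ vanish exactly. Since $\vv<dg,dg>_G\leq\vv<df,df>_G=\sigma_2(G)\vv<g,g>_B$, the Rayleigh quotient of $\bar\varphi+c\wt g$ immediately yields
\[
\sigma_2(G)=\sigma_2(\wt G)\leq\frac{\vv<d\varphi,d\varphi>_{\wt G_z}+c^2\vv<dg,dg>_G}{\vv<\varphi,\varphi>_{\wt B_z}+c^2\vv<g,g>_B}
\quad\Longrightarrow\quad
\frac{\vv<d\varphi,d\varphi>_{\wt G_z}}{\vv<\varphi,\varphi>_{\wt B_z}}\geq\sigma_2(G),
\]
which is exactly (3). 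The missing idea is thus to replace $f$ by $|f|$: you lose the eigenfunction property but keep the Rayleigh bound, gain strict positivity of the boundary integral, and retain the vanishing at $z$ that decouples the two pieces.
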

Then, by using He-Hua's trick in \cite{He-Hua2}, we have the following necessary and sufficient condition for $\sigma_2(G)=\sigma_2(\wt G)$ when $G$ has some symmetries.
\begin{thm}\label{thm-rigidity-2-sym}
Let $(\Gamma,\Sigma)$ be a connected finite graph with nonempty boundary and $z$ be an interior vertex of $\Gamma$. Let $G=\vee_z^r\Gamma$ with boundary $B=\sqcup^r \Sigma$ where $r\geq 2$ and $(\wt G,\wt B,m,w)$ be a  comb over $G$ equipped with the unit weight such that
\begin{enumerate}
\item for any $x\in B$, $\wt G_x=\wt B_x=\{x\}$;
\item for any $y\in V(G)\setminus(\{z\}\cup B)$, $\wt B_y=\emptyset$;
\item $z\not\in \wt B_z$.
\end{enumerate}
Then, $\sigma_2(\wt G)=\sigma_2(G)$ if and only if
\begin{equation}\label{eq-lambda1-sigma2}
\lambda_1(\wt G_z,\wt B_z,\{z\})\geq \sigma_2(G).
\end{equation}
Here $\vee_z^r\Gamma$ means the wedge-sum of $r$ copies of $\Gamma$ at $z$ (See Section \ref{sec-pre} for details.).
\end{thm}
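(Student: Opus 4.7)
The approach is to prove the two implications using the Rayleigh characterization of $\sigma_2$ together with two consequences of the wedge structure $G=\vee_z^r\Gamma$ with $r\ge 2$: the decoupling identity $\lambda_1(G,B,\{z\})=\lambda_1(\Gamma,\Sigma,\{z\})$ (the Steklov--Dirichlet operator with Dirichlet at $z$ splits as $r$ independent copies of its counterpart on $\Gamma$, since vertices in distinct copies are non-adjacent in $G$ away from $z$), and the bound $\sigma_2(G)\le\lambda_1(\Gamma,\Sigma,\{z\})$ (obtained by pulling back a Steklov--Dirichlet $\lambda_1$-eigenfunction $\tilde h$ on $(\Gamma,\Sigma,\{z\})$ antisymmetrically into two of the $r\ge 2$ copies of $\Gamma$, producing a valid Steklov test function on $G$ with Rayleigh quotient $\lambda_1(\Gamma,\Sigma,\{z\})$).

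For sufficiency, it suffices to prove $\sigma_2(\wt G)\ge\sigma_2(G)$ since Theorem \ref{thm-main} already gives the opposite inequality. Given $f\in\R^{V(\wt G)}$ with $\vv<f,1>_{\wt B}=0$, set $u:=f(z)$ and consider $\bar f:=f-u\cdot\mathbf{1}$. The constant shift preserves the Dirichlet energy $\vv<df,df>_{\wt G}=\vv<d\bar f,d\bar f>_{\wt G}$, while the orthogonality yields $\vv<\bar f,\bar f>_{\wt B}=\vv<f,f>_{\wt B}+u^2m(\wt B)\ge\vv<f,f>_{\wt B}$. Since $\bar f(z)=0$, both $\bar g:=\bar f|_{V(G)}$ and $\bar f_z:=\bar f|_{V(\wt G_z)}$ vanish at $z$, so the Steklov--Dirichlet Rayleigh inequalities give $\vv<d\bar g,d\bar g>_G\ge\lambda_1(G,B,\{z\})\vv<\bar g,\bar g>_B\ge\sigma_2(G)\vv<\bar g,\bar g>_B$ (via the identity and the bound above) and $\vv<d\bar f_z,d\bar f_z>_{\wt G_z}\ge\sigma_2(G)\vv<\bar f_z,\bar f_z>_{\wt B_z}$ (via the hypothesis). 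The additional teeth $\wt G_y$ for $y\in V(G)\setminus(\{z\}\cup B)$ contribute nonnegatively to the Dirichlet form and nothing to the boundary norm, so may be dropped when lower bounding. Summing the two inequalities yields $\vv<df,df>_{\wt G}\ge\sigma_2(G)\vv<f,f>_{\wt B}$.

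For necessity, argue by contrapositive: assume $\lambda_1(\wt G_z,\wt B_z,\{z\})<\sigma_2(G)$ and construct a test function giving $\sigma_2(\wt G)<\sigma_2(G)$. Let $\phi$ on $\wt G_z$ attain $\lambda_1(\wt G_z,\wt B_z,\{z\})$ with $\phi(z)=0$, and set $f=\phi$ on $V(\wt G_z)$, $f=h$ on $V(G)$ (extended constantly on other teeth), where $h$ is to be chosen with $h(z)=0$ and $\vv<h,1>_B=-\vv<\phi,1>_{\wt B_z}$. Then $\vv<f,1>_{\wt B}=0$ is automatic, and $R(f)<\sigma_2(G)$ reduces to the inequality
\begin{equation*}
\vv<dh,dh>_G-\sigma_2(G)\vv<h,h>_B<\bigl(\sigma_2(G)-\lambda_1(\wt G_z,\wt B_z,\{z\})\bigr)\vv<\phi,\phi>_{\wt B_z}.
\end{equation*}
Expanding $h$ in the Steklov--Dirichlet eigenbasis of $(G,B,\{z\})$, the decoupling implies that only symmetric-type eigenfunctions carry nonzero $\vv<\cdot,1>_B$, and a Lagrange calculation (using $r\ge 2$ to secure a nontrivial such eigenfunction with small eigenvalue) yields an explicit formula for the constrained minimum of the left-hand side.

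The main obstacle lies in the necessity direction: verifying that this constrained minimum is strictly below the right-hand side when $\sigma_2(G)<\lambda_1(\Gamma,\Sigma,\{z\})$ strictly, so that the minimum is itself strictly positive. The decisive estimate pairs a sharp Cauchy--Schwarz bound on $\vv<\phi,1>_{\wt B_z}^2/\vv<\phi,\phi>_{\wt B_z}$ with the multiplicative factor $r\ge 2$ appearing in the resolvent expression $r\vv<1,(\Lambda_{z,D}^\Gamma-\sigma_2(G))^{-1}1>_\Sigma$ that governs the minimum, where $\Lambda_{z,D}^\Gamma$ denotes the Steklov--Dirichlet operator on $(\Gamma,\Sigma,\{z\})$; the case $\sigma_2(G)=\lambda_1(\Gamma,\Sigma,\{z\})$ is considerably easier, as the constrained minimum then vanishes.
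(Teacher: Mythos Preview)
Your sufficiency argument is correct and more direct than the paper's. The paper first proves separately (Theorem~\ref{thm-sym}) that $\sigma_2(G'\vee_zG')=\lambda_1(G',B',\{z\})$ for any connected $(G',B')$, then doubles $G\vee_z\wt G_z$ at $z$ and combines monotonicity with this identity to get $\sigma_2(\wt G)\ge\min\{\lambda_1(G,B,\{z\}),\lambda_1(\wt G_z,\wt B_z,\{z\})\}$. Your shift $\bar f=f-f(z)$ and the energy split across the wedge at $z$ reach the same lower bound without the doubling.

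For necessity you leave a genuine but unnecessary gap. The case $\sigma_2(G)<\lambda_1(\Gamma,\Sigma,\{z\})$ that you flag as the obstacle cannot occur: your own shifting argument, applied on $G$ itself, gives $\vv<df,df>_G\ge\lambda_1(G,B,\{z\})\,\vv<f,f>_B$ for every $f$ with $\vv<f,1>_B=0$, whence $\sigma_2(G)\ge\lambda_1(G,B,\{z\})=\lambda_1(\Gamma,\Sigma,\{z\})$ and therefore equality holds. Tracing equality back through the shift also shows that every $\sigma_2(G)$-eigenfunction $f$ satisfies $f(z)=0$. The paper exploits exactly this fact (following the proof of Theorem~\ref{thm-main-rigidity-2}): with $g=|f|$ one has $g(z)=0$ and $\vv<dg,dg>_G\le\sigma_2(G)\vv<g,g>_B$, so the test function $\ol\phi+c\,\wt g$ on $\wt G$ has energy and boundary norm that split cleanly into the $\wt G_z$-part and the $G$-part, and the hypothesis $\sigma_2(\wt G)=\sigma_2(G)$ forces $\vv<d\phi,d\phi>_{\wt G_z}\ge\sigma_2(G)\vv<\phi,\phi>_{\wt B_z}$ at once. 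In your contrapositive framework this is precisely the choice $h=c\,|f|$, which makes your left-hand side $\le 0$ and renders the Lagrange computation and resolvent formula superfluous.
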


Similarly as before, by applying Theorem \ref{thm-main-rigidity}, Theorem \ref{thm-rigidity-Geom}, Theorem \ref{thm-main-rigidity-2} and Theorem \ref{thm-rigidity-2-sym} to combinatorial trees with boundary, we have the following conclusion  for rigidity of the monotonicity on combinatorial trees.
\begin{cor}\label{cor-rigidity-tree}
Let $\wt G$ be a finite combinatorial tree with boundary and $G$ be its nontrivial subtree which is also viewed as a combinatorial tree. Then,
\begin{enumerate}
\item $\sigma_i(\wt G)=\sigma_i(G)$ for any $i=1,2,\cdots, |L(G)|$ if and only if all the following statements are true:
\begin{enumerate}
  \item[(i)] for any $x\in V(G)\setminus Z$, $\wt G_x=\{x\}$ which implies that $L(G)\subset L(\wt G)$;
 \item[(ii)] for any $u\in \R^{V(\wt G)}$ with $u|_{L(G)}$ being constant and $\vv<u,1>_{L(\wt G)}=0$,
 \begin{equation*}
\vv<du,du>_{\wt G}\geq \sigma_{|L(G)|}(G)\vv<u,u>_{L(\wt G)}.
\end{equation*}
\end{enumerate}
In particular, if $Z=\emptyset$, then $\sigma_i(\wt G)=\sigma_i(G)$ for all $i=1,2,\cdots,|L(G)|$ if and only if $G=\wt G$;\\
\item  if $\sigma_i(\wt G)=\sigma_i(G)$ for all $i=1,2,\cdots, |L(G)|$, then
\begin{equation*}
\lambda_1(\wt G_z,\wt G_z\cap L(\wt G), \{z\})\geq \sigma_{|L(G)|}(G)
\end{equation*}
for any $z\in Z$;\\
 \item if $Z_1\subset \Omega$ and $\sigma_2(G)=\sigma_2(\wt G)$, then $\wt G_x=\{x\}$ for any $x\in V(G)\setminus Z_1$, and
\begin{equation*}
\lambda_1(\wt G_z,\wt G_z\cap L(\wt G), \{z\})\geq \sigma_{2}(G)
\end{equation*}
for any $z\in Z_1$;\\
 \item if $G=\vee_z^rT$ for some nontrivial finite tree $T$ with $z\in V(T)$ and $r\geq 2$, and  $\wt G_x=\{x\}$ for any $x\in V(G)\setminus\{z\}$, then $\sigma_2(G)=\sigma_2(\wt G)$ if and only if $$\lambda_1(\wt G_z,L(\wt G)\cap \wt G_z,\{z\})\geq \sigma_2(G).$$
     Here $T$ is viewed as a graph with boundary $B(T)=L(T)\setminus\{z\}$.
\end{enumerate}
Here $\Omega,Z$ are the same as in Theorem \ref{thm-main-rigidity} and $Z_1$ is the same as in Theorem \ref{thm-main-rigidity-2}.
\end{cor}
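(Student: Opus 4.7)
The plan is to obtain Corollary \ref{cor-rigidity-tree} by specializing Theorems \ref{thm-main-rigidity}, \ref{thm-rigidity-Geom}, \ref{thm-main-rigidity-2}, and \ref{thm-rigidity-2-sym} to the combinatorial-tree setting, once three routine-but-essential translations are carried out. First, since a tree is a comb over every one of its subtrees, the comb hypothesis is automatic. Second, because each tooth $\wt G_x$ is a nontrivial tree whenever it has more than one vertex and unit weight gives $m\equiv 1$, the hypothesis $m(\wt B_x)\geq m_x$ reduces to $|L(\wt G)\cap V(\wt G_x)|\geq 1$ for $x\in L(G)$, which holds automatically (as already observed in the excerpt after Theorem \ref{thm-main}). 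Third, with $\wt B=L(\wt G)$ and $B=L(G)$, all the $\wt B_x$-style conditions from the ambient theorems must be rewritten in terms of $\wt G_x$; this is the only place where genuine work is needed.

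The key translation lemma I would prove first is: for $x\in V(G)$, one has $\wt B_x=L(\wt G)\cap V(\wt G_x)$, and
\begin{enumerate}
\item if $x\in L(G)=B$, then $\wt B_x=\{x\}$ if and only if $\wt G_x=\{x\}$;
\item if $x\in \Omega(G)$, then $\wt B_x=\emptyset$ if and only if $\wt G_x=\{x\}$.
\end{enumerate}
The proof uses two elementary facts. First, by the definition of a comb, $V(\wt G_x)\cap V(G)=\{x\}$, so every $v\in V(\wt G_x)\setminus\{x\}$ has $\deg_{\wt G}(v)=\deg_{\wt G_x}(v)$, making leaves of $\wt G$ in $V(\wt G_x)\setminus\{x\}$ exactly leaves of the tree $\wt G_x$ other than $x$. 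Second, $\deg_{\wt G}(x)=\deg_{\wt G_x}(x)+\deg_G(x)$, so $x\in L(\wt G)$ iff $\deg_{\wt G_x}(x)+\deg_G(x)=1$. When $\wt G_x$ is nontrivial, it has at least two leaves (as a subtree), at least one of which must differ from $x$, producing an unwanted element of $\wt B_x$ in case (1) and a nonempty $\wt B_x$ in case (2); the converse directions are immediate. As a byproduct, whenever $\wt G_x=\{x\}$ for $x\in L(G)$, one has $\deg_{\wt G}(x)=\deg_G(x)=1$, so $L(G)\subset L(\wt G)$, which gives statement (i) in part (1) of the corollary.

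With this dictionary in hand, each part of the corollary becomes an almost verbatim rewording:
\begin{enumerate}
\item Part (1) is Theorem \ref{thm-main-rigidity} with conditions (1) and (2) combined via the lemma to yield $\wt G_x=\{x\}$ for all $x\in V(G)\setminus Z$, and with $\wt B$ replaced by $L(\wt G)$ in the third condition. The ``in particular'' clause follows because $Z=\emptyset$ forces $\wt G_x=\{x\}$ for every $x\in V(G)$, and since $\wt G$ is connected this means $\wt G=G$.
\item Part (2) is Theorem \ref{thm-rigidity-Geom} once one observes that for $z\in Z\subset\Omega$ the set $\wt B_z\setminus\{z\}$ coincides with $L(\wt G)\cap V(\wt G_z)$ (as $z\in\Omega(G)$ has $\deg_G(z)\geq 2$, so $z\notin L(\wt G)$).
\item Part (3) is Theorem \ref{thm-main-rigidity-2} with the same two translations.
\item Part (4) applies Theorem \ref{thm-rigidity-2-sym} with $\Gamma=T$, $\Sigma=L(T)\setminus\{z\}$; the hypothesis $r\geq 2$ forces $\deg_G(z)\geq 2$, hence $z\notin L(\wt G)$, which verifies condition (3) of that theorem, while $\wt G_x=\{x\}$ for $x\neq z$ together with the lemma verifies conditions (1) and (2).
\end{enumerate}

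I expect no serious obstacle. The only place a mistake is easy to make is the translation lemma above, where one must be careful to distinguish leaves of $\wt G$ from leaves of the subtree $\wt G_x$ at the vertex $x$ itself (they can disagree precisely because $\deg_G(x)$ contributes extra incidences in $\wt G$). Once the lemma is cleanly stated, the four parts of the corollary are direct applications and the presentation can be kept brief.
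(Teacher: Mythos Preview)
Your proposal is correct and follows essentially the same approach as the paper, which simply states that the corollary is obtained ``by applying Theorem~\ref{thm-main-rigidity}, Theorem~\ref{thm-rigidity-Geom}, Theorem~\ref{thm-main-rigidity-2} and Theorem~\ref{thm-rigidity-2-sym} to combinatorial trees with boundary'' without giving a detailed proof. Your translation lemma makes explicit the passage from the $\wt B_x$-conditions of the ambient theorems to the $\wt G_x$-conditions of the corollary, which the paper leaves implicit; the argument you give for it is correct, and the four parts then follow as you describe.
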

In the third part of this paper, as applications of the monotonicity of Steklov eigenvalues, we obtain some estimates on combinatorial trees with boundary which may be viewed as generalizations of He-Hua's isodiametric estimate in \cite{He-Hua1}. For example, among the  other estimates, we have the following results. For the definitions of star, regular star and regular comb, see \cite{FR} or Section \ref{sec-app}.
\begin{thm} Let $\wt G$ be a finite combinatorial graph with boundary.

(1)  If $\wt G$ contains the regular star $\St(r;l)$ as a subtree with $r\geq 2$ and $l\geq 1$, then
\begin{equation*}
\sigma_i(\wt G)\leq\frac1l
\end{equation*}
for $i=2,3,\cdots, r$. Moreover, the equality holds for  $i=2$ if and only if $\wt G=\St(r;l)\vee_o\wt G_o$ with $\lambda_1\left(\wt G_o, L(\wt G_o)\setminus \{o\}, o\right)\geq \frac{1}{l}$. Here $o$ is the center of the star $\St(r;l)$.

(2) If $\wt G$ contains the star $\St(l_1,l_2,\cdots,l_r)$ with $r\geq 2$ and $1\leq l_1\leq l_2\cdots\leq l_r$ as a subtree, then $$\sigma_2(\wt G)\leq \frac{r}{l_1+l_2+\cdots l_r}.$$ Moreover, when $r\geq 3$, the equality holds if and only if $l_1=l_2=\cdots=l_r$ and $\wt G=\St(r;l_1)\vee_o\wt G_o$ with $\lambda_1(\wt G_o,L(\wt G_o)\setminus\{o\},\{o\})\geq \frac{1}{l_1}$. Here $o$ is the center of the star.

(3) If $\wt G$ contains the regular comb $\Comb(r;l)$ with $r,l\geq 1$ as a subtree, then,
\begin{equation*}
\sigma_i(\wt G)\leq \frac{4\sin^2\frac{(i-1)\pi}{2(r+1)}}{1+4l\sin^2\frac{(i-1)\pi}{2(r+1)}}
\end{equation*}
for $i=1,2\cdots, r+1$. Moreover, the equality holds for all $i=1,2,\cdots, r+1$ if and only if $\wt G=\Comb(r;l)$.
\end{thm}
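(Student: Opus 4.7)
The plan is a uniform reduction: for each of (1)--(3), the hypothesis places a model subtree---a regular star, a general star, or a regular comb---inside $\wt G$, and the natural comb structure (in which the teeth at all model vertices are single vertices, except possibly at the joining point) lets Corollary \ref{cor-mono-graph} give $\sigma_i(\wt G) \leq \sigma_i(\text{model})$. It then remains to compute the right hand side, and the rigidity statements are read off from Corollary \ref{cor-rigidity-tree}.

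For (1), I would observe that on $\St(r;l)$ harmonicity forces $f$ to be linear along each ray with $f(o) = \bar a := \frac{1}{r}\sum_j a_j$, so $\Lambda a_j = (a_j-\bar a)/l$. This operator has eigenvalue $0$ on the constants and $1/l$ on their orthogonal complement of dimension $r-1$, so $\sigma_i(\St(r;l)) = 1/l$ for $i=2,\ldots,r$. Monotonicity then yields the stated bound, and the equality case at $i=2$ follows from Corollary \ref{cor-rigidity-tree}(4) with $T$ the single ray of length $l$, since $\St(r;l) = \vee_o^r T$.

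For (2), the path from $x_{r-1}$ to $x_r$ through $o$ is a subtree of $\St(l_1,\ldots,l_r)$ of length $l_{r-1}+l_r$, with second Steklov eigenvalue $2/(l_{r-1}+l_r)$, so monotonicity gives $\sigma_2(\St(l_1,\ldots,l_r)) \leq 2/(l_{r-1}+l_r)$. Since $l_j \leq l_{r-1} \leq (l_{r-1}+l_r)/2$ for $j \leq r-2$,
\begin{equation*}
r(l_{r-1}+l_r) - 2L \;=\; \sum_{j=1}^{r-2}\lf[(l_{r-1}+l_r) - 2l_j\ri] \;\geq\; 0,
\end{equation*}
so $2/(l_{r-1}+l_r) \leq r/L$, giving the claimed bound. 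For $r \geq 3$, equality throughout forces $l_1 = \cdots = l_r$ and the problem reduces to the regular case, with the wedge characterization coming from Corollary \ref{cor-rigidity-tree}(4). For (3), I would compute $\sigma_i(\Comb(r;l))$ by separation of variables: parametrizing so that $s \in \{0,\ldots,r\}$ runs along the spine and $t \in \{0,\ldots,l\}$ along each tooth (with $t=0$ on the spine and $t=l$ at the tip), I seek eigenfunctions $f(s,t) = \phi(s)\psi(t)$. Harmonicity on the tooth interior forces $\psi$ linear, the Steklov condition at the tip pins $\psi(t) = 1 + (t-l)\sigma$, and substituting into the Laplace equation at an interior spine vertex reduces the problem to $\Delta_s \phi = \lambda\phi$ on the spine path with $\lambda = \sigma/(1-l\sigma)$ and effective Neumann conditions at both endpoints. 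The ansatz $\phi(s) = \cos\lf(\theta(s+\tfrac12)\ri)$ satisfies the $s=0$ condition identically, and a short trigonometric reduction collapses the $s=r$ condition to $\sin\lf(\theta(r+1)\ri) = 0$, giving $\theta_k = k\pi/(r+1)$ and $\sigma_k = 4\sin^2(\theta_k/2)/(1+4l\sin^2(\theta_k/2))$ for $k=0,\ldots,r$. Monotonicity then yields the inequality.

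For the rigidity in (3), I would apply Corollary \ref{cor-rigidity-tree}(1) after verifying that the set $Z$ is empty for $\Comb(r;l)$: since $l\sigma_k < 1$ for every $k$, the factor $\psi_k$ never vanishes on tooth interiors, and a short check confirms that the $r$ cosines $\cos(\theta_k(s+\tfrac12))$ cannot simultaneously vanish at any spine vertex $s \in \{0,\ldots,r\}$. I expect the main technical obstacle to be the comb computation: the algebraic reduction of the spine-endpoint condition to $\sin(\theta(r+1)) = 0$ requires careful trigonometric bookkeeping, and the $Z = \emptyset$ check, while elementary, is a separate argument that must be done cleanly.
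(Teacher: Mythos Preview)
Your approach is correct and, for parts (1) and (3), follows the paper's route: compute the Steklov spectrum of the model subtree (the paper packages these as Lemmas~\ref{lem-regular-star} and~\ref{lem-comb}) and then invoke monotonicity and Corollary~\ref{cor-rigidity-tree}. One small gap: for the equality characterization in~(1) you cite only Corollary~\ref{cor-rigidity-tree}(4), but~(4) presupposes $\wt G_x=\{x\}$ for all $x\in V(G)\setminus\{o\}$. That hypothesis is not automatic---it is supplied by part~(3) of the same corollary once you observe $Z_1=\{o\}$. The paper invokes both~(3) and~(4) for exactly this reason, and the same remark applies to your reduction in~(2).

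For part~(2) your argument is genuinely different and more elementary than the paper's. The paper computes the full characteristic polynomial $P(t)=\sum_i\prod_{j\neq i}(t-l_j)$ of the DtN map on $\St(l_1,\ldots,l_r)$ (Lemma~\ref{lem-star}), reads off from Vieta that $\sum_{i\geq 2}\sigma_i^{-1}=\tfrac{r-1}{r}L$, and bounds $\sigma_2$ via $(r-1)/\sigma_2\geq\sum_{i\geq 2}\sigma_i^{-1}$. For rigidity it then forces all $\sigma_i$ equal and uses the degree-two Vieta relation to obtain $\sum_{i<j}(l_i-l_j)^2=0$. Your route sidesteps the polynomial entirely: monotonicity down to the path through the two longest arms gives $\sigma_2(\wt G)\leq 2/(l_{r-1}+l_r)$, and the arithmetic inequality $r(l_{r-1}+l_r)\geq 2L$ finishes the bound; for $r\geq 3$, equality in the latter already forces $l_1=\cdots=l_r$ (since each summand $(l_{r-1}+l_r)-2l_j$ must vanish and then $l_{r-2}\leq l_{r-1}$ pins $l_{r-1}=l_r$), reducing to part~(1). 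What your approach buys is simplicity and independence from the spectral computation; what the paper's approach buys is the full family of symmetric-function inequalities $p_k(\sigma_2^{-1},\ldots,\sigma_r^{-1})\geq\tfrac{r-k}{r}\,p_k(l_1,\ldots,l_r)$, of which the $\sigma_2$ bound is only the $k=1$ case.
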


The rest of this paper is organized as follows. In Section \ref{sec-pre}, we introduce some preliminary notions and symbols. In Section \ref{sec-mono}, we prove Theorem \ref{thm-main}, Theorem \ref{thm-main-rigidity}, Theorem \ref{thm-rigidity-Geom} and their corollaries. In Section \ref{sec-rigid}, we prove Theorem \ref{thm-main-rigidity-2}, Theorem \ref{thm-rigidity-2-sym} and their corollaries. In Section \ref{sec-app}, we apply the monotonicity of Steklov eigenvalues to obtain estimates extending the isodiametric estimate of He-Hua \cite{He-Hua1}.
\section{Preliminaries}\label{sec-pre}
In this section, we introduce some notions and symbols that are used throughout the paper.

For a weighted finite graph $(G,m,w)$, let $A^1(G)$ be the collection of skew symmetric functions $\alpha:V(G)\times V(G)\to \R$ such that $\alpha(x,y)=0$ when $x\not\sim y$ (Such a function is called a flow in \cite{Ba}.). The exterior differential map $d: \R^{V(G)}\to A^1(G)$ is defined as
\begin{equation}
df(x,y)=\left\{\begin{array}{ll}f(y)-f(x)&x\sim y\\
0&x\not\sim y.
\end{array}\right.
\end{equation}
Moreover, equip with $\R^{V(G)}$ and $A^1(G)$ the following natural inner products
\begin{equation}
\vv<f,g>_{G}=\sum_{x\in V(G)}f(x)g(x)m_x
\end{equation}
and
\begin{equation}
\vv<\alpha,\beta>_{G}=\sum_{\{x,y\}\in E(G)}\alpha(x,y)\beta(x,y)w_{xy}
\end{equation}
for any $f,g\in \R^{V(G)}$ and $\alpha,\beta\in A^1(G)$. For $A\subset V(G)$, we denote
\begin{equation}
m(A)=\sum_{x\in A}m_x
\end{equation}
and
\begin{equation}
\vv<f,g>_{A}=\sum_{x\in A}f(x)g(x)m_x.
\end{equation}
Denote the adjoint operator of $d:\R^{V(G)}\to A^1(G)$ as $d^*$. Then, similar to the smooth case, define the Laplacian operator $\Delta_G:\R^{V(G)}\to \R^{V(G)}$ as
\begin{equation}
\Delta_G=-d^*d.
\end{equation}
A direct computation will give us
\begin{equation}
\Delta_G f(x)=\frac{1}{m_x}\sum_{y\sim x}(f(y)-f(x))w_{xy}.
\end{equation}
Moreover, by the definition of $\Delta_G$, it is clear that
\begin{equation}\label{eq-integrate-by-parts}
\vv<\Delta_Gf,g>_G=-\vv<df,dg>_{G}\ \ \forall f,g\in \R^{V(G)}.
\end{equation}
So, $-\Delta_G$ is a nonnegative self-adjoint operator on $\R^{V(G)}$. Its eigenvalues:
$$0=\mu_1(G,m,w)\leq\mu_2(G,m,w)\leq\cdots\leq \mu_{|V(G)|}(G,m,w)$$
are called Laplacian eigenvalues of $(G,m,w)$. For simplicity, we will also write $\mu_i(G,m,w)$ as $\mu_i(G)$ or $\mu_i$ if the ignored information is clear in context.

Let $(G,B,m,w)$ be a weighted finite graph with boundary and $f\in \R^{V(G)}$, define the outward normal derivative of $f$ at $x\in B$ as
\begin{equation}\label{eq-normal-derivative}
\frac{\p f}{\p n}(x):=\frac{1}{m_x}\sum_{y\in V(G)}(f(x)-f(y))w_{xy}=-\Delta_Gf(x).
\end{equation}
Then, by \eqref{eq-integrate-by-parts}, it is clear that
\begin{equation}\label{eq-Green}
\vv<df,dg>_G=-\vv<\Delta_Gf,g>_\Omega+\vv<\frac{\p f}{\p n},g>_B.
\end{equation}
This is a discrete version of Green's formula and is also the reason one defines $\frac{\p f}{\p n}$ as in \eqref{eq-normal-derivative}.

Similar to the smooth case, a real number $\sigma$ is called a Steklov eigenvalue of $(G,B,m,w)$ if the following boundary value problem:
\begin{equation}
\left\{\begin{array}{ll}\Delta_G f(x)=0&x\in\Omega\\
\frac{\p f}{\p n}(x)=\sigma f(x)&x\in B
\end{array}\right.
\end{equation}
has a nonzero solution $f$. The nonzero solution $f$ is called a Steklov eigenfunction of the graph for $\sigma$. Steklov eigenvalues are the eigenvalues of the so called Steklov operator or Dirichlet-to-Neumann map (DtN map for short):
\begin{equation}
\Lambda:\R^B\to \R^B, \Lambda(u)=\frac{\p \wh u}{\p n}
\end{equation}
where $\wh u$ is the harmonic extension of $u$. By \eqref{eq-Green},
\begin{equation}
\vv<\Lambda(u),v>_B=\vv<d\wh u,d\wh v>_G
\end{equation}
for any $u,v\in \R^{B}$. So, the the corresponding Rayleigh quotient for Steklov eigenvalues is
\begin{equation}
R[u]=\frac{\vv<d\wh u,d\wh u>_G}{\vv<u,u>_B},
\end{equation}
 and the Steklov eigenvalues are nonnegative. We denote them as
\begin{equation*}
0=\sigma_1(G,B,m,w)\leq \sigma_2(G,B,m,w)\leq\cdots\leq \sigma_{|B|}(G,B,m,w).
\end{equation*}
The first Steklov eigenvalue is zero because constant functions are its eigenfunctions. The second Steklov eigenvalue will be positive if $G$ is assumed to be connected (See \cite{HHW}). For simplicity, we will also simply denote $\sigma_i(G,B,m,w)$ as $\sigma_i$, $\sigma_i(G)$ or $\sigma_i(G,B)$ if the ignored information is clear in context.

As in \cite{He-Hua1}, we also need the notion of Steklov eigenvalues with vanishing Dirichlet boundary data. Let $(G,B,m,w)$ be a weighted finite graph with boundary and $Z\subset \Omega$. A real number $\lambda$ is called a Steklov eigenvalues for $(G,B,m,w)$ with vanishing Dirichlet boundary data on $Z$ if the following boundary value problem:
\begin{equation}
\left\{\begin{array}{ll}\Delta_G f(x)=0&x\in\Omega\setminus Z;\\
\frac{\p f}{\p n}(x)=\lambda f(x)&x\in B\\
f(x)=0&x\in Z
\end{array}\right.
\end{equation}
has a nonzero solution $f$. The  eigenvalue $\lambda$ is an eigenvalue of the so called DtN map with vanishing Dirichlet boundary data on $Z$:
\begin{equation}
\Lambda_0:\R^{B}\to\R^B, \Lambda_0(u)=\frac{\p \zeta_u}{\p n}
\end{equation}
where $\zeta_u$ is the solution of
\begin{equation}
\left\{\begin{array}{ll}\Delta_G \zeta_u(x)=0&x\in\Omega\setminus Z\\
\zeta_u(x)=u(x)&x\in B\\
\zeta_u(x)=0&x\in Z.
\end{array}\right.
\end{equation}
By \eqref{eq-Green},
\begin{equation}
\vv<\Lambda_0(u),v>_{B}=\vv<d\zeta_u,d\zeta_v>_G
\end{equation}
for any $u,v\in \R^{B}$. So the corresponding Rayleigh quotient for Steklov eigenvalues with vanishing Dirichlet boundary data on $Z$ is
\begin{equation}
R_0[u]=\frac{\vv<d\zeta_u,d\zeta_v>_G}{\vv<u,u>_B}
\end{equation}
and if the graph is connected, all the eigenvalues are positive. We denote them as
\begin{equation*}
\lambda_1(G,B,Z)\leq \lambda_2(G,B,Z)\leq\cdots\leq \lambda_{|B|}(G,B,Z).
\end{equation*}
The collection of all the eigenvalues counting multiplicity is denoted as $\spec(G,B,Z)$. By the Dirichlet principle, it is clear that
\begin{equation}\label{eq-lambda1}
\lambda_1(G,B,Z)=\inf\left\{\frac{\vv<du,du>_G}{\vv<u,u>_B}\ \bigg|\ u\in \R^{V(G)}\mbox{ with } u|_B\not\equiv 0\mbox{ and }u|_Z\equiv0.\right\}.
\end{equation}
If $B=\emptyset$, we take $\lambda_1(G,B,Z)=+\infty$ conventionally.

Next recall the following notion of wedge-sum of graphs (see also \cite{He-Hua2}).
\begin{defn}
Let $(G_1,B_1)$ and $(G_2,B_2)$ be two graphs with boundary and $z_i\in \Omega(G_i)$ for $i=1,2$. Then,  the wedge-sum of $(G_1,B_1)$ and $(G_2, B_2)$ at $z_1$ and $z_2$ is denoted as $(G_1\vee_{z_1,z_2}G_2, B_1\sqcup B_2)$ with
\begin{equation}
V(G_1\vee_{z_1,z_2}G_2)=V(G_1)\sqcup V(G_2)/z_1\backsim z_2
\end{equation}
and
\begin{equation}
E(G_1\vee_{z_1,z_2}G_2)=E(G_1)\sqcup E(G_2).
\end{equation}
Here $\sqcup$ means disjoint union and $z_1\backsim z_2$ means identifying $z_1$ and $z_2$. We also simply denote the wedge-sum of $(G_1,B_1)$ and $(G_2,B_2)$ at $z_1$ and $z_2$ as $G_1\vee_{z_1,z_2}G_2$. The wedge-sum of two copies of $(G,B)$ at $z\in \Omega(G)$ is denoted as $G\vee_zG$. The wedge-sum of $r$-copies of $(G,B)$ at $z\in\Omega(G)$ is denoted as $\vee_z^rG$. If $G$ is weighted, then the weight of $\vee_z^rG$ is inherited from $G$ in the natural way.
\end{defn}
Finally, we need the following notion of constant extensions of a function defined on the base of a comb.
\begin{defn}Let $\wt G$ be a comb over $G$ and $f\in \R^{V(G)}$, the constant extension $\wt f\in \R^{V(\wt G)}$ of $f$ is defined as $\wt f|_{V(\wt G_x)}\equiv f(x)$ for any $x\in V(G)$.

\end{defn}

\section{Monotonicity of Steklov eigenvalues and its rigidity}\label{sec-mono}
In this section, we prove Theorem \ref{thm-main}, Theorem \ref{thm-main-rigidity}, Theorem \ref{thm-rigidity-Geom} and their corollaries. We first give the proof of Theorem \ref{thm-main}. Our method uses only the min-max principle of Courant and is different with that of He-Hua \cite{He-Hua2}.
\begin{proof}[Proof of Theorem \ref{thm-main}]
Let $\varphi_1=1, \varphi_2,\cdots, \varphi_{|\wt B|}$ be an orthogonal system of Steklov eigenfunctions on $\wt G$ such that $\varphi_i$ is the Steklov eigenfunction for $\sigma_i(\wt G)$ for $i=1,2,\cdots, |\wt B|$. Let $f_1=1,f_2,\cdots, f_{|B|}$ be an orthogonal system of Steklov eigenfunctions on $G$ such that $f_i$ is the Steklov eigenfunction for $\sigma_i(G)$ for $i=1,2,\cdots,|B|$. For each $i\geq 2$, let
$$f=c_1f_1+c_2f_2+\cdots+c_{i}f_i$$
where $c_1,c_2,\cdots,c_i$ are constants that are not all zero such that
\begin{equation}\label{eq-orthogonal}
\vv<\wt f, \varphi_j>_{\wt B}=0
\end{equation}
for any $j=1,2,\cdots,i-1$. The existence of $c_1,c_2,\cdots,c_i$ is clear because \eqref{eq-orthogonal} with $j=1,2,\cdots,i-1$ form a homogeneous linear system with $i-1$ equations on $i$ unknowns $c_1,c_2,\cdots,c_i$ which certainly has nonzero solutions. Then, by using the assumption \eqref{eq-assumption-boundary},
\begin{equation}\label{eq-sigma-i}
\begin{split}
\sigma_i(\wt G)\leq \frac{\vv<d\wt f,d\wt f>_{\wt G}}{\vv<\wt f,\wt f>_{\wt B}}
=&\frac{\vv<df,df>_G}{\sum_{x\in V(G)}f^2(x)m(\wt B_x)}\\ \leq&\frac{\vv<df,df>_G}{\sum_{x\in B}f^2(x)m_x}=\frac{\vv<df,df>_G}{\vv<f,f>_{B}}\leq \sigma_i(G).
\end{split}
\end{equation}
This completes the proof of the theorem.
\end{proof}
We next prove  Corollary \ref{cor-mono-graph}.
\begin{proof}[Proof of Corollary \ref{cor-mono-graph}]
Because the inclusion map from $G$ to $\wt G$ induces an isomorphism on fundamental groups of $G$ and $\wt G$, any cycle in $\wt G$ is contained in $G$. Thus, $\wt G$ is a comb over $G$, and for any $x\in V(G)$, the tooth $\wt G_x$ is a tree. Therefore,
\begin{equation}
|V(\wt G_x)\cap B(\wt G)|\geq 1
\end{equation}
for any $x\in B(G)$. So, the graphs $(\wt G, \wt B:=B(\wt G) )$ and $(G, B:=B(G))$ equipped with the unit weight satisfy \eqref{eq-assumption-boundary} in Theorem \ref{thm-main}. Then, the conclusion follows from Theorem \ref{thm-main}.
\end{proof}
Next, we come to prove Theorem \ref{thm-main-rigidity} considering rigidity of the monotonicity in Theorem \ref{thm-main} in general situation.
\begin{proof}[Proof of Theorem \ref{thm-main-rigidity}]
We first show that \emph{there are $|B|$ functions $g_1\equiv 1,g_2,\cdots, g_{|B|}$ on $G$ such that
\begin{enumerate}
\item[(i)] each $g_i$ is a Steklov eigenfunction for $\sigma_i(G)$;
\item[(ii)] each $\wt g_i$ is a Steklov eigenfunction for $\sigma_i(\wt G)=\sigma_i(G)$;
\item[(iii)] for any $x\in \Omega$, $g_i^2(x)m(\wt B_x)=0$ when $i\geq 2$;
\item[(iv)] $\vv<\wt g_i,\wt g_j>_{\wt B}=0$ for any $1\leq i<j\leq |B|$,
\end{enumerate}
}
\noindent by induction. In fact, for $i\geq 2$, suppose $g_1\equiv 1,g_2,\cdots, g_{i-1}$ satisfying the above properties have been constructed. Let $$g_i=c_1f_1+c_2f_2+\cdots+c_i f_i$$ with $c_1,c_2\cdots,c_i$ constants to be determined such that
\begin{equation}\label{eq-orthogonal-wt-g}
\vv<\wt g_i,\wt g_j>_{\wt B}=0
\end{equation}
for $j=1,2,\cdots, i-1$. Here $f_1,f_2,\cdots, f_{|B|}$ are the same as in the proof of Theorem \ref{thm-main}. The same reason as in the proof of Theorem \ref{thm-main}, we can find $c_1,c_2,\cdots,c_i$ not all zero satisfying \eqref{eq-orthogonal-wt-g}. Then, the same argument as in \eqref{eq-sigma-i} in the proof of Theorem \ref{thm-main}, we know that $g_i$ must satisfy the properties (i)--(iv) above since $\sigma_i(\wt G)=\sigma_i(G)$.

By (iv), we know that $g_1,\cdots,g_{|B|}\in \R^{V(G)}$ are linearly independent. Because $g_i$ is the harmonic extension of $g_i|_B$ for $i=1,2,\cdots,|B|$, $g_1|_B,g_2|_B,\cdots, g_{|B|}|_B$ form a basis for $\R^{B}$. This implies that for any $x\in B$, there is some $g_i$ with $i\geq 2$ such that $g_i(x)\neq 0$. Now, if there is some $y\in \wt B_x\setminus \{x\}$, then
\begin{equation}
0=\frac{\p \wt g_i}{\p n}(y)=\sigma_i(\wt G)\wt g_i(y)=\sigma_i(\wt G)g_i(x)
\end{equation}
which implies that $\sigma_i(\wt G)=0$ and is a contradiction because $\wt G$ is assumed to be connected. Therefore, $\wt B_x\setminus \{x\}=\emptyset$  and hence $\wt B_x=\{x\}$ since $\wt B_x \neq\emptyset$ for all $x\in B$ by assumption \eqref{eq-assumption-boundary}. This proves (1).

Moreover, for any $y\in \Omega\setminus Z$, there is some $g_i$ with $i\geq 2$ such that $g_i(y)\neq 0$. Then, by (iii), $m(\wt B_y)=0$. Thus $\wt B_y=\emptyset$. This proves (2).

Conversely, it is clear that if (1) and (2) holds, then for any Steklov eigenfunction $f\in \R^{V(G)}$ of $G$, $\wt f$ is also a Steklov eigenfunction of $\wt G$ with the same eigenvalue. Thus, the equality of \eqref{eq-mono-main} holds for all $i=1,2,\cdots, |B|$ if and only if for any $u\in \R^{V(\wt G)}$ with
\begin{equation}
\vv<u,\wt g_i>_{\wt B}=0
\end{equation}
for $i=1,2,\cdots,|B|$,
\begin{equation}\label{eq-rigidity-equiv}
\vv<du,du>_{\wt G}\geq \sigma_{|B|}(G)\vv<u,u>_{\wt B}.
\end{equation}
By (1) and (2),
\begin{equation}\label{eq-u-g-i}
0=\vv<u,\wt g_i>_{\wt B}=\vv<u,g_i>_B
\end{equation}
for $i=2,\cdots,|B|$. So, $u|_B$ must be constant and $\vv<u,1>_{\wt B}=0$. Conversely, for any $u\in \R^{V(\wt G)}$ with $u|_{B}$ being constant and $\vv<u,1>_{\wt B}=0$, it is clear that $\vv<u,\wt g_i>_{\wt B}=0$ for $i=1,2,\cdots, |B|$. This completes the proof of the theorem.
\end{proof}

Finally, we come to prove Theorem \ref{thm-rigidity-Geom}. Before proving it, we need the following lemma on Steklov eigenvalues with vanishing Dirichlet boundary data.
\begin{lem}\label{lem-DtN-dirichlet}
Let $(G,B,m,w)$ be a weighted connected finite graph with boundary and $Z$ be a subset of $\Omega$. Then,
\begin{enumerate}
\item there is an eigenfunction $u$ of $\lambda_1(G,B,Z)$ such that $u\geq 0$;
\item an eigenfunction $u$ of the Steklov operator on $(G,B)$ with vanishing Dirichlet boundary data on $Z$ and with $u|_B>0$ must be an eigenfunction for $\lambda_1(G,B,Z)$.
\end{enumerate}
\end{lem}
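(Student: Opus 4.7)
The plan is to prove (1) by a rearrangement argument: for any eigenfunction of $\lambda_1(G,B,Z)$, I replace it with its absolute value (which does not increase the Rayleigh quotient from \eqref{eq-lambda1}), then harmonically re-extend the resulting boundary data, and finally apply the graph maximum principle to obtain nonnegativity. Part (2) will then follow from the self-adjointness of the DtN operator $\Lambda_0$ on $(\R^B,\vv<\cdot,\cdot>_B)$, combined with the nonnegative eigenfunction produced in (1).

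For (1), let $u$ be any eigenfunction for $\lambda_1(G,B,Z)$, so that $u|_Z\equiv 0$ and $u$ achieves the infimum in \eqref{eq-lambda1}. The pointwise inequality $\bigl||u(y)|-|u(x)|\bigr|\leq |u(y)-u(x)|$ gives $\vv<d|u|,d|u|>_G\leq \vv<du,du>_G$, while $\vv<|u|,|u|>_B=\vv<u,u>_B$. Also $|u||_Z\equiv 0$ and $|u||_B\not\equiv 0$ (otherwise $u$ would vanish on $B\cup Z$, and harmonicity on $\Omega\setminus Z$ together with connectedness would force $u\equiv 0$). Hence $|u|$ also attains the infimum in \eqref{eq-lambda1}. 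Now set $v:=\zeta_{|u||_B}$, the harmonic extension of $|u||_B$ vanishing on $Z$. By the Dirichlet principle, $\vv<dv,dv>_G\leq \vv<d|u|,d|u|>_G$, and $v|_B=|u||_B$, so $v$ is also a minimizer of $R_0$, hence an eigenfunction for $\lambda_1(G,B,Z)$. Applying the graph maximum principle to $v$ (harmonic on $\Omega\setminus Z$ with $v|_B=|u||_B\geq 0$ and $v|_Z=0$) yields $v\geq 0$ throughout $V(G)$.

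For (2), by (1) select a nonnegative eigenfunction $w$ for $\lambda_1(G,B,Z)$. I first observe that $w|_B\not\equiv 0$: otherwise $w$ would vanish on $B\cup Z$ and, being harmonic on $\Omega\setminus Z$, be identically zero, contradicting that $w$ is an eigenfunction. Now let $u$ be any eigenfunction of $\Lambda_0$ with eigenvalue $\lambda$ and $u|_B>0$. The identity $\vv<\Lambda_0(f),g>_B=\vv<d\zeta_f,d\zeta_g>_G$ shows that $\Lambda_0$ is self-adjoint on $(\R^B,\vv<\cdot,\cdot>_B)$, so eigenvectors for distinct eigenvalues are $\vv<\cdot,\cdot>_B$-orthogonal. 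However, $\vv<u,w>_B=\sum_{x\in B}u(x)w(x)m_x>0$, since $u(x)>0$ for every $x\in B$ and $w\geq 0$ on $B$ with $w|_B\not\equiv 0$. Therefore $u$ cannot correspond to an eigenvalue different from $\lambda_1(G,B,Z)$, which forces $\lambda=\lambda_1(G,B,Z)$.

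The only real subtlety is to remember that $|u|$ by itself need not be harmonic on $\Omega\setminus Z$, so the nonnegative eigenfunction must be produced by applying the Dirichlet principle to harmonically re-extend $|u||_B$ (yielding $\zeta_{|u||_B}$) before invoking the maximum principle. Beyond this, both parts are standard Perron--Frobenius style arguments for the self-adjoint DtN operator $\Lambda_0$, and I expect no further obstacle.
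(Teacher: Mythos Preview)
Your proof is correct and essentially follows the paper's line, with one detour in part~(1). The paper simply takes $u=|f|$ and stops there: since $|f|$ attains the infimum in \eqref{eq-lambda1}, the Euler--Lagrange equation for that unconstrained variational problem already forces $|f|$ to be harmonic on $\Omega\setminus Z$ and to satisfy the Steklov boundary condition, so $|f|$ itself is the desired nonnegative eigenfunction. Your re-extension step $v=\zeta_{|u||_B}$ followed by the maximum principle is not wrong, but it is unnecessary---your ``subtlety'' that $|u|$ might fail to be harmonic does not actually arise, because a minimizer of \eqref{eq-lambda1} over \emph{all} admissible functions (not just harmonic ones) is automatically harmonic. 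Your part~(2) is identical to the paper's.
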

\begin{proof}
\begin{enumerate}
\item Let $f$ be an eigenfunction for $\lambda_1(G,B,Z)$. Then, by \eqref{eq-lambda1},
\begin{equation}
\lambda_1(G,B,Z)=\frac{\vv<df,df>_{G}}{\vv<f,f>_B}\geq \frac{\vv<d|f|,d|f|>_G}{\vv<|f|,|f|>_{B}}\geq \lambda_1(G,B,Z).
\end{equation}
So, $u=|f|\geq 0$ is also an eigenfunction of $\lambda_1(G,B,Z)$.
\item Let $f\geq 0$ be an eigenfunction for $\lambda_1(G,B,Z)$. If $u$ is not an eigenfunction for $\lambda_1(G,B,Z)$, then
    \begin{equation}
    \vv<u,f>_{B}=0.
    \end{equation}
However, this is impossible since $u|_B>0$,$f|_{B}\geq0$ and $f|_{B}\not\equiv 0$.
\end{enumerate}
\end{proof}
We are ready to prove Theorem \ref{thm-rigidity-Geom}.
\begin{proof}[Proof of Theorem \ref{thm-rigidity-Geom}]
Let $g_1\equiv 1,g_2,\cdots,g_{|B|}$ be the Steklov eigenfunctions of $G$ found in the proof Theorem \ref{thm-main-rigidity}. Because $g_2|_Z=\cdots=g_{|B|}|_Z=0$, $g_2,\cdots,g_{|B|}$ are also eigenfunctions for the DtN map with vanishing Dirichlet boundary data on $Z$. Let $f$ be the solution of
\begin{equation}
\left\{\begin{array}{ll}\Delta_G f(x)=0&x\in\Omega\setminus Z\\
f(x)=1&x\in B\\
f(x)=0&x\in Z.
\end{array}\right.
\end{equation}
Then $\vv<f,g_i>_B=0$ for $i=2,3,\cdots,|B|$. Moreover, by Green's formula \eqref{eq-Green},
\begin{equation}
\begin{split}
\vv<\frac{\p f}{\p n}, g_i>_B=\vv<f,\frac{\p g_i}{\p n}>_B=\sigma_i(G)\vv<f, g_i>_B=0
\end{split}
\end{equation}
for $i=2,\cdots,n$. Thus $\frac{\p f}{\p n}\big|_{B}$ is constant and  $f$ is an eigenfunction for the DtN map with vanishing Dirichlet boundary data on $Z$. By (2) of Lemma \ref{lem-DtN-dirichlet}, $f$ is an eigenfunction for
$$\lambda_1(G,B,Z)\leq \sigma_2(G)$$
because $f|_B\equiv 1>0$.

Let $\varphi\in \R^{V(\wt G_z)}$ be an eigenfunction for $\lambda_1(\wt G_z,\wt B_z\setminus\{z\},\{z\})$ and $\ol\varphi\in \R^{V(\wt G)}$ be the zero extension of $\varphi$. Let $c$ be a constant such that
\begin{equation}
\vv<\ol\varphi+c\wt f,1>_{\wt B}=0.
\end{equation}
Then, by using Theorem \ref{thm-main-rigidity} with $u=\ol\varphi+c\wt f$,
\begin{equation}
\begin{split}
\sigma_{|B|}(G)\leq\frac{\vv<d(\ol\varphi+c\wt f),d(\ol\varphi+c\wt f)>_{\wt G}}{\vv<\ol\varphi+c\wt f,\ol\varphi+c\wt f>_{\wt B}}=\frac{\vv<d\varphi,d\varphi>_{\wt G_z}+c^2\vv<df,df>_{G}}{\vv<\varphi,\varphi>_{\wt B_z}+c^2\vv<f,f>_B}.
\end{split}
\end{equation}
Moreover, by that $$\vv<df,df>_{G}=\lambda_1(G,B,Z)\vv<f,f>_{B}\leq \sigma_{|B|}(G)\vv<f,f>_B,$$
we have
\begin{equation}
\lambda_1(\wt G_z,\wt B_z\setminus\{z\},\{z\})=\frac{\vv<d\varphi,d\varphi>_{\wt G_z}}{\vv<\varphi,\varphi>_{\wt B_z}}\geq \sigma_{|B|}(G).
\end{equation}
This completes the proof of the theorem.
\end{proof}
\section{Rigidity for first positive eigenvalues}\label{sec-rigid}
In this section, we consider the rigidity for $\sigma_2(G)=\sigma_2(\wt G)$, and prove Theorem \ref{thm-main-rigidity-2} and Theorem \ref{thm-rigidity-2-sym}. We first prove Theorem \ref{thm-main-rigidity-2}. The idea is similar with the proofs of Theorem \ref{thm-main-rigidity} and Theorem \ref{thm-rigidity-Geom}.
\begin{proof}[Proof of Theorem \ref{thm-main-rigidity-2}]
Let $f\in \R^{V(G)}$ be an eigenfunction of $\sigma_2(G)$ and  $c$ be a constant such that
\begin{equation*}
\vv<c+\wt f, 1>_{\wt B}=0.
\end{equation*}
Then,
\begin{equation*}
\begin{split}
\sigma_2(G)=\sigma_2(\wt G)\leq& \frac{\vv<d(c+\wt f),d(c+\wt f)>_{\wt G}}{\vv<c+\wt f,c+\wt f>_{\wt B}}\\
\leq& \frac{\vv<df,df>_G}{\vv<c+f,c+f>_{B}}=\frac{\vv<df,df>_G}{c^2m(B)+\vv<f,f>_{B}}\leq \frac{\vv<df,df>_G}{\vv<f,f>_{B}}=\sigma_2(G).
\end{split}
\end{equation*}
Thus $c=0$ and $\wt f$ is also an eigenfunction of $\sigma_2(\wt G)$ on $\wt G$. Now the same argument as in the proof of Theorem \ref{thm-main-rigidity} will give us (1) and (2).

Moreover, for $z\in Z_1$, let $\varphi\in \R^{V(\wt G_z)}$ be such that $\varphi|_{\wt B_z}\not\equiv 0$ and $\varphi(z)=0$. Let $\ol \varphi\in \R^{V(\wt G)}$ be the zero extension of $\varphi$. Let $f\in \R^{V(G)}$ be an eigenfunction for $\sigma_2(G)$ and $g=|f|$. Choose $c\in \R$ such that
\begin{equation}
\vv<\ol \varphi+c\wt g,1>_{\wt B}=0.
\end{equation}
Then,
\begin{equation}
\begin{split}
\sigma_2(G)=&\sigma_{2}(\wt G)\\
\leq &\frac{\vv<d(\ol \varphi+c\wt g),d(\ol \varphi+c\wt g)>_{\wt G}}{\vv<\ol \varphi+c\wt g,\ol \varphi+c\wt g>_{\wt B}}\\
=&\frac{\vv<d\varphi,d\varphi>_{\wt G_z}+c^2\vv<dg,dg>_G}{\vv<\varphi,\varphi>_{\wt B_z}+c^2\vv<g,g>_{B}}.
\end{split}
\end{equation}
Note that
\begin{equation}
\frac{\vv<dg,dg>_{G}}{\vv<g,g>_{B}}\leq \frac{\vv<df,df>_G}{\vv<f,f>_B}=\sigma_2(G).
\end{equation}
Thus
\begin{equation}
\frac{\vv<d\varphi,d\varphi>_{\wt G_z}}{\vv<\varphi,\varphi>_{\wt B_z}}\geq \sigma_2(G).
\end{equation}
This gives us (3) and the proof of the theorem is completed.
\end{proof}
Before proving   Theorem \ref{thm-rigidity-2-sym}, we need the following key result extending the corresponding result for trees in \cite{He-Hua2} to general weighted graphs.
\begin{thm}\label{thm-sym}
Let $(G,B,m,w)$ be a weighted connected finite graph with boundary and $z\in \Omega$. Then, any Steklov eigenfunction $f$ of $G\vee_z G$ for the eigenvalue $\sigma_2(G\vee_z G)$ must satisfy $f(z)=0$. Moreover
\begin{equation}
\sigma_2(G\vee_zG)=\lambda_1(G,B,\{z\}).
\end{equation}
\end{thm}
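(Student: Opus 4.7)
The plan is to exploit the natural $\mathbb{Z}/2$-symmetry $\tau$ on $G\vee_z G$ that swaps the two copies of $G$ and fixes $z$. Since $\tau$ is an automorphism of the weighted graph with boundary $B\sqcup B$, the DtN map commutes with $\tau^*$ and every Steklov eigenspace splits into its $\tau$-symmetric and $\tau$-antisymmetric subspaces; in particular any Steklov eigenfunction $f$ decomposes as $f=f_s+f_a$ with both summands being eigenfunctions for the same eigenvalue.

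Next I would set up the correspondence with two eigenvalue problems on $G$. For a $\tau$-symmetric $f_s$, set $g:=f_s|_{V(G)}$; harmonicity of $f_s$ at $z$ on $G\vee_zG$ is equivalent to harmonicity of $g$ at $z$ on $G$, because the two copy-contributions to $\sum_{y\sim z}(f_s(y)-f_s(z))w_{zy}$ are identical. Thus $f_s$ is a Steklov eigenfunction of $G\vee_zG$ for $\sigma$ if and only if $g$ is a Steklov eigenfunction of $(G,B)$ for $\sigma$. For a $\tau$-antisymmetric $f_a$, antisymmetry forces $f_a(z)=0$ and the sum at $z$ on $G\vee_zG$ vanishes automatically by cancellation, so $g:=f_a|_{V(G)}$ satisfies $g(z)=0$ and is an eigenfunction of $\Lambda_0$ on $(G,B,\{z\})$ for the same eigenvalue. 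Hence the Steklov spectrum of $G\vee_zG$ is the disjoint union (with multiplicity) of the Steklov spectrum of $(G,B)$ and of $\spec(G,B,\{z\})$.

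The crux is the comparison $\lambda_1(G,B,\{z\})\leq\sigma_2(G)$. Let $g$ be a $\sigma_2(G)$-eigenfunction, so that $\vv<g,1>_B=0$ and $g|_B\not\equiv 0$. The shifted function $h:=g-g(z)$ satisfies $h(z)=0$, and expanding the boundary norm using $\vv<g,1>_B=0$ yields $\vv<h,h>_B=\vv<g,g>_B+g(z)^2 m(B)$; together with $\vv<dh,dh>_G=\vv<dg,dg>_G$ and the variational formula \eqref{eq-lambda1} this gives
\begin{equation*}
\lambda_1(G,B,\{z\})\leq\frac{\vv<dh,dh>_G}{\vv<h,h>_B}=\frac{\vv<dg,dg>_G}{\vv<g,g>_B+g(z)^2 m(B)}\leq\sigma_2(G),
\end{equation*}
with equality throughout forcing $g(z)=0$.

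Combining these, $\sigma_2(G\vee_zG)=\min(\sigma_2(G),\lambda_1(G,B,\{z\}))=\lambda_1(G,B,\{z\})$. For the vanishing assertion, I split into two cases. If $\lambda_1(G,B,\{z\})<\sigma_2(G)$, then no symmetric eigenfunction attains the value $\sigma_2(G\vee_zG)$, so every $\sigma_2(G\vee_zG)$-eigenfunction is antisymmetric and hence vanishes at $z$. If $\lambda_1(G,B,\{z\})=\sigma_2(G)$, the equality case of the comparison forces every $\sigma_2(G)$-eigenfunction of $G$ to vanish at $z$, and therefore so does its $\tau$-symmetric extension; combined with the automatic vanishing of antisymmetric eigenfunctions, we conclude $f(z)=0$ in all cases. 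The main delicate point I expect is precisely this coupling between the strict and equality cases: the argument only closes because the same test-function computation that produces $\lambda_1\leq\sigma_2$ also pins the extremal $g$ down to vanish at $z$.
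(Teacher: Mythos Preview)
Your proof is correct and takes a genuinely different route from the paper's. The paper argues directly on $G\vee_zG$: for an eigenfunction $f$ it symmetrizes to $g=f+f^s$, and if $g\not\equiv 0$ it builds a test function on $G\vee_zG$ equal to $g$ on one copy and constantly $g(z)$ on the other; the Rayleigh-quotient chain then forces $g(z)=0$. Only after establishing $f(z)=0$ does the paper prove $\sigma_2(G\vee_zG)=\lambda_1(G,B,\{z\})$ by two separate inequalities (antisymmetric extension for $\le$, energy splitting using $f(z)=0$ for $\ge$). By contrast, you first identify the entire Steklov spectrum of $G\vee_zG$ as the disjoint union of $\spec(G,B)$ (symmetric part) and $\spec(G,B,\{z\})$ (antisymmetric part), then prove the comparison $\lambda_1(G,B,\{z\})\le\sigma_2(G)$ with the single test function $h=g-g(z)$ on $G$ itself, and read off both conclusions from that. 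Your approach is more structural and yields the full spectral decomposition as a by-product; the paper's approach is more hands-on but never needs to invoke the operator-level commutation of $\Lambda$ with $\tau^*$.

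One small point you should make explicit: to invoke \eqref{eq-lambda1} for $h=g-g(z)$ you need $h|_B\not\equiv 0$. This follows because $h|_B\equiv 0$ together with $\vv<g,1>_B=0$ would force $g(z)=0$ and $g|_B\equiv 0$, hence $g\equiv 0$ by uniqueness of harmonic extension, contradicting that $g$ is an eigenfunction. With that check in place, your equality-case analysis (if $\lambda_1=\sigma_2(G)$ then every $\sigma_2(G)$-eigenfunction vanishes at $z$, hence so does every symmetric $\sigma_2(G\vee_zG)$-eigenfunction) closes the argument correctly.
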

\begin{proof}
For convenience, we denote the second copy of $(G,B)$ in $G\vee_zG$ as $(G',B')$, and for any vertex $y\in V(G)\setminus \{z\}$, denote the corresponding vertex in $G'$ as $y'$. For any $u\in \R^{V(G\vee_zG)}$, let $u^s\in \R^{V(G\vee_zG)}$ be such that $u^s(z)=u(z)$, $u^s(y')=u(y)$ and $u^s(y)=u(y')$ for any $y\in V(G)\setminus\{z\}$.

For any Steklov eigenfunction $f$ of $G\vee_z G$ for the eigenvalue $\sigma_2(G\vee_z G)$, it is clear that $f^s$ is still an eigenfunction for $\sigma_2(G\vee_zG)$. If $$g=f+f^s\equiv 0,$$ then $$f(z)=\frac12g(z)=0,$$ and we are done. Otherwise, $g$ is an eigenfunction for $\sigma_2(G\vee_zG)$ with $$g^s=g.$$ Then,
\begin{equation}\label{eq-g-1}
\vv<g,1>_B=\vv<g,1>_{B'}=0
\end{equation}
and
\begin{equation}\label{eq-g-2}
\frac{\vv<dg,dg>_{G}}{\vv<g,g>_B}=\frac{\vv<dg,dg>_{G'}}{\vv<g,g>_{B'}}=\sigma_2(G\vee_zG).
\end{equation}
Let $\wt g\in \R^{V(G\vee_zG)}$ be such that $\wt g|_{V(G)}=g|_{V(G)}$ and $\wt g(y')=g(z)$ for any $y\in V(G)\setminus\{z\}$. Let $a\in \R$ be such that
$$\vv<a+\wt g,1>_{B\cup B'}=0.$$
Then, by \eqref{eq-g-1} and \eqref{eq-g-2},
\begin{equation}\label{eq-sigma-2-GvG}
\begin{split}
\sigma_2(G\vee_zG)\leq& \frac{\vv<d(a+\wt g),d(a+\wt g)>_{G\vee_zG}}{\vv<a+\wt g,a+\wt g>_{B\cup B'}}\\
=&\frac{\vv<dg,dg>_{G}}{\vv<a+ g,a+ g>_{B}+ \vv<a+\wt g,a+\wt g>_{B'}}\\
\leq&\frac{\vv<dg,dg>_{G}}{\vv<a+g,a+g>_{B}}\\
=&\frac{\vv<dg,dg>_{G}}{a^2m(B)+\vv< g, g>_{B}}\\
\leq&\frac{\vv<dg,dg>_{G}}{\vv< g, g>_{B}}\\
=&\sigma_2(G\vee_zG).
\end{split}
\end{equation}
Thus, all the inequalities in \eqref{eq-sigma-2-GvG} must be equalities which implies that $a=0$ and $g(z)=0$. Hence $f(z)=\frac12 g(z)=0$.

Moreover, for any eigenfunction $u\in \R^{V(G)}$ for $\lambda_1(G,B,\{z\})$, let $v\in \R^{V(G\vee_zG)}$ be such that $v|_{V(G)}=u$ and $v(y')=-u(y)$ for any $y\in V(G)\setminus\{z\}$. Then, $\vv<v,1>_{B\cup B'}=0$ and hence
\begin{equation}
\begin{split}
\sigma_2(G\vee_zG)\leq \frac{\vv<dv,dv>_{G\vee_zG}}{\vv<v,v>_{B\cup B'}}=\frac{\vv<du,du>_{G}}{\vv<u,u>_B}=\lambda_1(G,B,\{z\}).
\end{split}
\end{equation}
On the other hand, let $f\in \R^{V(G\vee_zG)}$ be an eigenfunction for $\sigma_2(G\vee_zG)$. Then
\begin{equation}
\begin{split}
\vv<df,df>_G+\vv<df^s,df^s>_{G}=&\vv<df,df>_{G\vee_zG}\\
=&\sigma_2(G\vee_z G)\vv<f,f>_{B\cup B'}\\
=&\sigma_2(G\vee_z G)(\vv<f,f>_{B}+\vv<f^s,f^s>_{B})
\end{split}
\end{equation}
Thus, either
\begin{equation}
\vv<df,df>_G\leq \sigma_2(G\vee_z G) \vv<f,f>_{B}
\end{equation}
or
\begin{equation}
\vv<df^s,df^s>_G\leq \sigma_2(G\vee_z G) \vv<f^s,f^s>_{B}.
\end{equation}
Noting that $f(z)=f^s(z)=0$. So, by \eqref{eq-lambda1},
\begin{equation*}
\begin{split}
\lambda_1(G,B,\{z\})\leq \sigma_2(G\vee_zG).
\end{split}
\end{equation*}
This completes the proof of the theorem.
\end{proof}
We are now ready to prove Theorem \ref{thm-rigidity-2-sym}.
\begin{proof}[Proof of Theorem \ref{thm-rigidity-2-sym}]
The proof of the necessary part is the same as the proof of Theorem \ref{thm-main-rigidity-2}. We only need to prove the sufficient part.

Because $\wt G_x=\wt B_x=\{x\}$ for $x\in B$ and $\wt B_y=\emptyset $ for $y\in \Omega(G)\setminus \{z\}$,
\begin{equation}\label{eq-sigma2=sigma2}
\sigma_2(\wt G)=\sigma_2(G\vee_z\wt G_z).
\end{equation}
Then, by the monotonicity of Steklov eigenvalues and Theorem \ref{thm-sym},
\begin{equation}\label{eq-sigma2-min}
\begin{split}
\sigma_2(G\vee_z\wt G_z)\geq &\sigma_2((G\vee_z\wt G_z)\vee_z(G\vee_z\wt G_z))\\
=&\lambda_1(G\vee_z\wt G_z,B\cup \wt B_z,\{z\})\\
=&\min\{\lambda_1(G,B,\{z\}),\lambda_1(\wt G_z,\wt B_z,\{z\})\}.
\end{split}
\end{equation}
The last equality comes from the following easy fact
\begin{equation*}
\spec(G\vee_z\wt G_z,B\cup \wt B_z,\{z\})=\spec(G,B,\{z\})\sqcup\spec(\wt G_z,\wt B_z,\{z\}).
\end{equation*}
Moreover, by using the same fact and Theorem \ref{thm-sym} again,
\begin{equation}\label{eq-lambda-1}
\begin{split}
\lambda_1(G,B,\{z\})=&\lambda_1(\vee^r_z\Gamma, \sqcup^r\Sigma,\{z\})\\
=&\lambda_1(\Gamma,\Sigma,\{z\})\\
=&\sigma_2(\vee_z^2\Gamma)\\
\geq &\sigma_2(\vee_z^r\Gamma)\\
=&\sigma_2(G)
\end{split}
\end{equation}
where the last inequality comes from the monotonicity of Steklov eigenvalues and that $r\geq 2$. Now, combining \eqref{eq-lambda1-sigma2}, \eqref{eq-sigma2=sigma2}, \eqref{eq-sigma2-min} and \eqref{eq-lambda-1}, we know that
\begin{equation}
\sigma_2(\wt G)\geq \sigma_2(G).
\end{equation}
By monotonicity of Steklov eigenvalues again, $\sigma_2(\wt G)=\sigma_2(G)$. This completes the proof of the theorem.
\end{proof}
\section{Some applications on trees}\label{sec-app}
In this section, we give some applications of the monotonicity of Steklov eigenvalues on trees. All trees are considered as combinatorial trees with boundary without further indications.  First recall the definition of a star (See also \cite{FR}).
\begin{defn}
The tree obtained by identifying $r$ paths on one of their end vertices is called a star of degree $r$. The identified vertex is called the center of the star and the paths are called the arms of the star. The star of degree $r$ and with the lengths of the $r$ arms: $l_1,l_2,\cdots, l_r$ is denoted as $\St(l_1,l_2,\cdots,l_r)$. If $l_1=l_2=\cdots=l_r=l$, the star $\St(l_1,l_2,\cdots,l_r)$ is call a regular star and simply denoted as $\St(r;l)$.
\end{defn}
We next compute the eigenvalues and eigenspaces of a regular star.
\begin{lem}\label{lem-regular-star}
For $r\geq 2$ and $l\geq 1$,
\begin{equation*}
\sigma_2(\St(r;l))=\sigma_3(\St(r;l))=\cdots=\sigma_{r}(\St(r;l))=\frac1{l}.
\end{equation*}
Moreover, suppose the center of $\St(r;l)$ is $o$ and the $j^{\rm th}$ arm is: $o\sim v_{j 1}\sim v_{j 2}\sim\cdots\sim v_{j l}$ for $j=1,2,\cdots, r$. Then, the eigenspace of the eigenvalue $\frac{1}{l}$ is generated by
\begin{equation}
f_j(x)=\left\{\begin{array}{rl}0&x=o\\
\frac{k}{l}&x=v_{1k},k=1,2,\cdots,l\\
-\frac{k}{l}&x=v_{jk},k=1,2,\cdots,l\\
0&{\rm otherwise}
\end{array}\right.
\end{equation}
for $j=2,3,\cdots, r$.
\end{lem}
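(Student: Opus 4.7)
The plan is to verify directly that each of the proposed functions $f_2,\ldots,f_r$ is a Steklov eigenfunction with eigenvalue $\tfrac{1}{l}$, and then finish by a dimension count: since $|B(\St(r;l))|=r$, the Steklov operator has exactly $r$ eigenvalues (with multiplicity), the constant function gives $\sigma_1=0$, and producing $r-1$ linearly independent eigenfunctions for $\tfrac{1}{l}$ will be enough to conclude $\sigma_2=\cdots=\sigma_r=\tfrac{1}{l}$ together with the claimed description of the eigenspace.

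For the harmonicity check at the center $o$, the unit-weight Laplacian gives $\Delta f_j(o)=\sum_{y\sim o}(f_j(y)-f_j(o))=f_j(v_{11})+f_j(v_{j1})=\tfrac{1}{l}-\tfrac{1}{l}=0$, since $f_j$ vanishes on $v_{j'1}$ for $j'\notin\{1,j\}$. At every other interior vertex, $f_j$ either vanishes on both neighbors (on the silent arms) or agrees with a discrete linear function of the arc-length parameter $k\mapsto \pm k/l$ along a path, so that $f_j(v_{1,k-1})+f_j(v_{1,k+1})-2f_j(v_{1,k})=0$ by direct computation, and likewise on arm $j$. For the boundary condition, each leaf $v_{j',l}$ has a single neighbor $v_{j',l-1}$, so $\tfrac{\p f_j}{\p n}(v_{j',l})=f_j(v_{j',l})-f_j(v_{j',l-1})$; this evaluates to $\tfrac{1}{l}$ at $v_{1l}$, $-\tfrac{1}{l}$ at $v_{jl}$, and $0$ at the remaining leaves, matching $\tfrac{1}{l}\cdot f_j(v_{j',l})$ in every case.

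For linear independence, I observe that $f_j|_B$, viewed in $\R^{B}$ with coordinates indexed by the $r$ leaves $v_{1l},v_{2l},\ldots,v_{rl}$, has a $1$ in the first coordinate and a $-1$ in the $j$-th coordinate, with all other coordinates zero. The resulting $r\times(r-1)$ matrix is manifestly of rank $r-1$, so $f_2,\ldots,f_r$ are linearly independent on $B$; since Steklov eigenfunctions are determined by their boundary data through harmonic extension, they are also linearly independent as elements of $\R^{V(\St(r;l))}$. Combined with the fact that the Steklov spectrum has exactly $r$ values with the constant function providing $\sigma_1=0$, this forces the eigenvalue $\tfrac{1}{l}$ to have multiplicity exactly $r-1$, filling up $\sigma_2,\ldots,\sigma_r$, and identifies $\mathrm{span}\{f_2,\ldots,f_r\}$ as the full eigenspace. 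The only mildly delicate point in the proof is keeping the orientation of $\tfrac{\p}{\p n}$ correct at the leaves; otherwise everything reduces to direct computation enabled by the symmetry of $\St(r;l)$.
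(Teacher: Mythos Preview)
Your proof is correct and is exactly the ``direct verification'' the paper alludes to in its one-line proof: you check harmonicity at the center and along each arm, verify the Steklov boundary condition at the leaves, and then use a dimension count against $|B|=r$ together with $\sigma_1=0$ to pin down $\sigma_2=\cdots=\sigma_r=\tfrac{1}{l}$ and the eigenspace. There is nothing to add.
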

\begin{proof}
It is not hard to get the conclusion by direct verification.
\end{proof}
By using Lemma \ref{lem-regular-star} and the monotonicity of Steklov eigenvalues, we have the following estimate for finite trees containing a regular star as a subtree.
\begin{thm}\label{thm-reg-star}
For $r\geq 2$ and $l\geq 1$, let $\wt G$ be a finite tree containing $\St(r;l)$ as a subtree. Then,
\begin{equation}\label{eq-sigma-i-regular-star}
\sigma_i(\wt G)\leq\frac1l
\end{equation}
for $i=2,3,\cdots, r$. Moreover, the equality holds for $i=2$ if and only if $\wt G=\St(r;l)\vee_o\wt G_o$ with $\lambda_1\left(\wt G_o, L(\wt G_o)\setminus \{o\}, o\right)\geq \frac{1}{l}$. Here $o$ is the center of the star $\St(r;l)$.
\end{thm}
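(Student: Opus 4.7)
The plan is to combine the monotonicity for combinatorial trees (the corollary to Theorem \ref{thm-main}) with the explicit spectral information on $\St(r;l)$ provided by Lemma \ref{lem-regular-star}, and then read off the rigidity statement from the tree version of the rigidity theorems collected in Corollary \ref{cor-rigidity-tree}. For the inequality part, I would simply apply the tree monotonicity to the inclusion $\St(r;l)\subset \wt G$: since $\sigma_i(\St(r;l))=\tfrac1l$ for $i=2,\dots,r$ by Lemma \ref{lem-regular-star}, the monotonicity gives $\sigma_i(\wt G)\leq \tfrac1l$ for the same range of $i$, which is \eqref{eq-sigma-i-regular-star}.

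For the equality case at $i=2$, the first task is to compute $Z_1$ (in the sense of Theorem \ref{thm-main-rigidity-2}) for $G=\St(r;l)$. Using the basis $\{f_2,\dots,f_r\}$ of the $\sigma_2$-eigenspace from Lemma \ref{lem-regular-star}, I would observe that every $f_j$ vanishes at the center $o$, while $f_2(v_{1k})=k/l\neq 0$ and $f_j(v_{jk})=-k/l\neq 0$ for $j\geq 2$. Hence $Z_1=\{o\}$, and since $\deg o=r\geq 2$ the center lies in $\Omega(\St(r;l))$, so $Z_1\subset \Omega$.

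For the necessity direction, assuming $\sigma_2(\wt G)=1/l=\sigma_2(\St(r;l))$, I would invoke Corollary \ref{cor-rigidity-tree} (3): it yields $\wt G_x=\{x\}$ for every $x\in V(\St(r;l))\setminus\{o\}$ and $\lambda_1(\wt G_o, L(\wt G_o)\setminus\{o\},\{o\})\geq 1/l$. The first conclusion is precisely the statement $\wt G=\St(r;l)\vee_o\wt G_o$, because in a tree the condition $\wt B_y=\emptyset$ forces $\wt G_y=\{y\}$ (otherwise $\wt G_y$ would contain a leaf $u\neq y$, and since $u\notin V(G)$ this $u$ would also be a leaf of $\wt G$, contradicting $\wt B_y=\emptyset$). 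For the sufficiency direction, given $\wt G=\St(r;l)\vee_o\wt G_o$ with $\lambda_1(\wt G_o, L(\wt G_o)\setminus\{o\},\{o\})\geq 1/l$, I would apply Corollary \ref{cor-rigidity-tree} (4) with $T$ the path of length $l$ based at $o$, so that $\vee_o^r T=\St(r;l)=G$; the hypothesis of (4) is then verified, and (4) delivers $\sigma_2(\wt G)=\sigma_2(\St(r;l))=1/l$.

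The main obstacle, and the one step I would double-check carefully, is the passage from the abstract rigidity output of Corollary \ref{cor-rigidity-tree} (3) to the concrete wedge-sum structure $\wt G=\St(r;l)\vee_o\wt G_o$; once the observation about trees in the previous paragraph is in hand the rest is bookkeeping, so no genuinely new ingredient beyond Lemma \ref{lem-regular-star} and the tree rigidity corollary is required.
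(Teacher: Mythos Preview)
Your proposal is correct and follows essentially the same route as the paper: the inequality from the tree monotonicity together with Lemma \ref{lem-regular-star}, and the rigidity from parts (3) and (4) of Corollary \ref{cor-rigidity-tree} after identifying $Z_1=\{o\}$ via the explicit eigenfunctions. Your write-up simply fills in the details the paper leaves implicit (the verification of $Z_1=\{o\}$, the passage from $\wt G_x=\{x\}$ for $x\neq o$ to the wedge-sum description, and the identification $L(\wt G_o)\setminus\{o\}=\wt G_o\cap L(\wt G)$), so no new ingredient is needed.
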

\begin{proof}
The inequality \eqref{eq-sigma-i-regular-star} comes from the monotonicity of Steklov eigenvalues and Lemma \ref{lem-regular-star}. Note that $\St(r;l)=\vee_o^rP$ with $r\geq 2$ where $P$ is a path of length $l$ with one of the end vertex $o$. So, the rigidity part comes from (3) and (4) of Corollary \ref{cor-rigidity-tree} by noting that $Z_1=\{o\}$ in this case because of Lemma \ref{lem-regular-star}.
\end{proof}
Next, we consider Steklov eigenvalues of a general star.
\begin{lem}\label{lem-star}
Let $r\geq 2$ and $1\leq l_1\leq l_2\leq\cdots\leq l_r$, and let $\sigma_2\leq \sigma_3\leq\cdots\leq\sigma_r$ be the positive Steklov eigenvalues for $\St(l_1,l_2,\cdots,l_r)$. Then,

${\rm(1)}$ $\frac1{\sigma_2},\frac1{\sigma_3},\cdots,\frac1{\sigma_r}$ are the $r-1$ roots of the polynomial
\begin{equation*}
\begin{split}
P(t)=&\sum_{i=1}^r(t-l_1)(t-l_2)\cdots\wh{(t-l_i)}\cdots (t-l_r)\\
=&\sum_{i=0}^{r-1}(-1)^{r-i-1}(i+1)p_{r-i-1}(l_1,l_2,\cdots,l_r)t^i.
\end{split}
\end{equation*}
Here $p_k(t_1,t_2,\cdots,t_r)$ is the elementary symmetric polynomial of degree $k$ on $t_1,t_2,\cdots,t_r$, for $k=0,1,2,\cdots,r$. More precisely, $p_0=1$ and
\begin{equation*}
p_k(t_1,t_2,\cdots, t_r)=\sum_{1\leq i_1<i_2<\cdots<i_k\leq r}t_{i_1}t_{i_2}\cdots t_{i_k}.
\end{equation*}

${\rm (2)}$ $Z\neq \emptyset $ if and only if
\begin{equation}\label{eq-Z-nonempty}
l_1=l_2=\cdots=l_{r-1}\ \mbox{and}\ l_r=rd+l_1
\end{equation}
for some nonnegative integer $d$. Moreover, in this case, $Z$ only contains the vertex on the $r^{\rm th}$ arm with distance $d$ to the center of the star, and
$$\sigma_2=\frac{1}{(r-1)d+l_1},\ \sigma_3=\sigma_4=\cdots=\sigma_{r}=\frac{1}{l_1}.$$ Here $Z$ is the same as in  Theorem \ref{thm-main-rigidity} for $G=\St(l_1,l_2,\cdots,l_r)$.
\end{lem}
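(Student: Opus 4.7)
The plan is to parametrize harmonic functions on the star and reduce both parts to a small linear-algebra calculation. Label the $j$-th arm as $o = v_{j,0} \sim v_{j,1} \sim \cdots \sim v_{j,l_j}$. Since every interior vertex on an arm has degree two, any harmonic function $f$ is linear along each arm, namely $f(v_{j,k}) = c + k a_j$ where $c := f(o)$ and $a_j := f(v_{j,1}) - c$, and harmonicity at the center forces $\sum_j a_j = 0$. At the leaf $v_{j,l_j}$ the outward normal derivative equals $a_j$, so the Steklov condition $\partial f/\partial n = \sigma f$ becomes $a_j(t - l_j) = c$ for all $j$, with $t := 1/\sigma$. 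All of the analysis is driven by this one system together with $\sum_j a_j = 0$.

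For part (1), I would argue that a positive $\sigma = 1/t$ is a Steklov eigenvalue iff $P(t) = 0$. If $c \neq 0$, necessarily $t \neq l_j$ for all $j$, so $a_j = c/(t-l_j)$, and $\sum_j a_j = 0$ rewrites as $\sum_j 1/(t-l_j) = 0$; clearing denominators gives $P(t) = 0$, and conversely any root $t$ avoiding all $l_j$ rebuilds a one-dimensional eigenspace. If $c = 0$ there must exist $j_0$ with $a_{j_0} \neq 0$ and $t = l_{j_0}$, and $\sum_j a_j = 0$ can then be met only when $l_{j_0}$ appears among the $l_j$'s with multiplicity $m \geq 2$, in which case a direct local factorization of each summand of $P$ shows that $P$ vanishes at $t = l_{j_0}$ to order exactly $m - 1$, matching the $(m-1)$-dimensional eigenspace of this branch. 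Since $\deg P = r - 1$ equals the total multiplicity of positive Steklov eigenvalues, the matching is complete. The displayed closed form for $P$ then follows by expanding each $\prod_{j\neq i}(t - l_j)$ in elementary symmetric polynomials and summing via the identity $\sum_{i=1}^r p_k(l_1, \ldots, \wh{l_i}, \ldots, l_r) = (r-k)\,p_k(l_1, \ldots, l_r)$, which counts how often each monomial of $p_k$ appears.

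For part (2), the space of $f \in \R^{V(G)}$ with $(\Delta_G f)|_\Omega \equiv 0$ and $\vv<f,1>_B = 0$ is the $(r-1)$-dimensional space of slope vectors $(a_1, \ldots, a_r)$ with $\sum_j a_j = 0$, with center value forced to $c = -\tfrac{1}{r}\sum_j l_j a_j$ by the boundary mean-zero condition. An interior vertex $x$ lies in $Z$ iff the linear functional $(a_j) \mapsto f(x)$ vanishes on this hyperplane, equivalently iff its coefficient vector is a scalar multiple of $(1, \ldots, 1)$. For $x = o$ this vector is $-\tfrac{1}{r}(l_1, \ldots, l_r)$, forcing $l_1 = \cdots = l_r$; for $x = v_{j_0, k}$ with $1 \leq k \leq l_{j_0} - 1$ the $j$-th coordinate is $-\tfrac{l_j}{r} + k\,\delta_{j j_0}$, forcing $l_j = l$ for all $j \neq j_0$ and $l_{j_0} = rk + l$. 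Reordering so that $l_1 \leq \cdots \leq l_r$ identifies $j_0 = r$ and $d = k$, and the center case is recovered as $d = 0$, proving the characterization \eqref{eq-Z-nonempty} and that $Z$ consists of a single vertex.

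Under this condition, substituting $l_1 = \cdots = l_{r-1} = l$ and $l_r = rd + l$ directly into $P$ gives
\[
P(t) = (t-l)^{r-2}\bigl[(r-1)(t - l_r) + (t-l)\bigr] = r(t-l)^{r-2}\bigl[t - l - (r-1)d\bigr],
\]
so the positive Steklov eigenvalues are $1/l$ with multiplicity $r - 2$ and $1/(l + (r-1)d)$ with multiplicity one, yielding the stated ordering $\sigma_2 = 1/((r-1)d + l_1)$ and $\sigma_3 = \cdots = \sigma_r = 1/l_1$. The main obstacle I anticipate is the multiplicity bookkeeping in part (1) when several $l_j$ coincide: both the "$c = 0$" branch of eigenfunctions and the higher-order vanishing of $P$ at repeated $l_j$ must be matched carefully, and the local factorization sketched above is the natural tool for making the correspondence precise.
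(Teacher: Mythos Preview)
Your proof is correct and follows essentially the same approach as the paper's. Both proofs parametrize harmonic functions on the star by the fact that they are arithmetic progressions along each arm, reduce the Steklov condition to a small linear system, and read off the characteristic polynomial; the paper uses the leaf values $x_i=f(v_i)$ and the single relation $(1-l_i\sigma)x_i=f(o)$ together with $\sum_i x_i=0$, whereas you use the center value $c$ and slopes $a_j$ with $a_j(t-l_j)=c$ and $\sum_j a_j=0$, which is simply a change of variables. Your treatment of part~(1) is in fact more detailed than the paper's (which just writes ``From this we get the conclusion''): you separate the $c\neq 0$ and $c=0$ branches and explicitly match the order of vanishing of $P$ at a repeated $l_{j_0}$ with the dimension of the corresponding eigenspace, which is exactly the bookkeeping one needs to justify the multiplicity statement. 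For part~(2), your linear-functional-on-a-hyperplane formulation is a clean repackaging of the paper's direct computation (the paper substitutes $f(z)=0$ and the center harmonicity relation and compares coefficients of the $x_i$), and leads to the same conditions $l_j=l$ for $j\neq j_0$, $l_{j_0}=rk+l$; the final eigenvalue computation via factoring $P$ is the same in spirit.
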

\begin{proof}
(1) Let $v_i$ be the  end vertex of the $i^{\rm th}$ arm of the star for $i=1,2,\cdots,r$. Let $f$ be an eigenfunction for the eigenvalue $\sigma>0$ and $f(v_i)=x_i$ for $i=1,2,\cdots, r$. Note that $\Delta f|_\Omega=0$ where $\Omega$ means the interior of the star. So, the values of $f$ restricting on each arm must be an arithmetic progress. Thus, $$f(o)=(1-l_i\sigma)x_i.$$  Here $o$ is the center of the star. So, $\sigma$ is a positive Steklov eigenvalue of the star if and only if the linear system
\begin{equation}
\left\{\begin{array}{l}(1-l_1\sigma)x_1=(1-l_2\sigma)x_2=\cdots=(1-l_r\sigma)x_r\\
x_1+x_2+\cdots+x_r=0
\end{array}\right.
\end{equation}
has nonzero solutions. From this we get the conclusion.

(2) When $Z$ is nonempty, suppose that $Z$ contains a vertex $z$ on the $j^{\rm th}$ arm with distance $d$ to the center of the star. Let $f\in \R^{V}$ be  harmonic in the interior and $f(v_i)=x_i$ for $i=1,2,\cdots,r$ with $x_1+x_2+\cdots+x_r=0$. Then $f(z)=0$ since $z\in Z$. Let $y$ be the value of $f$ at the center of the star. Because the values of $f$ on each arm form an arithmetic progress, the value of $f$ at the vertex adjacent to the center on the $i^{\rm th}$ arm must be $y-\frac{y-x_i}{l_i}$. Moreover, because $f$ is harmonic at the center, we have
\begin{equation}\label{eq-har-st}
\sum_{i=1}^r\frac{y-x_i}{l_i}=0.
\end{equation}
Furthermore, by that $f(z)=0$, we have
\begin{equation}\label{eq-y}
y=-\frac{d}{l_j-d}x_j.
\end{equation}
Substituting \eqref{eq-y} into \eqref{eq-har-st}, we know that
\begin{equation}
\sum_{i\neq j}\frac{x_i}{l_i}+\left[\left(\sum_{i=1}^r\frac{d}{(l_j-d)l_i}\right)+\frac1{l_j}\right]x_j=0
\end{equation}
whenever $x_1+x_2+\cdots+x_r=0$. Thus
\begin{equation}
\frac1{l_1}=\frac1{l_2}=\cdots=\frac1{l_{j-1}}=\left(\sum_{i=1}^r\frac{d}{(l_j-d)l_i}\right)+\frac1{l_j}=\frac{1}{l_{j+1}}=\cdots=\frac{1}{l_{r}}.
\end{equation}
So, the $j^{\rm th}$ arm must be the longest arm and without loss of generality, we can assume $j=r$. Furthermore, $l_1=l_2=\cdots=l_{r-1}$ and $l_r=dr+l_1$.  The eigenvalues can be easily obtained by using (1).
\end{proof}
Combining Lemma \ref{lem-star} and the monotonicity of Steklov eigenvalues, we have the following estimates for trees containing a star as a subtree.
\begin{thm}
Let $\wt G$ be a finite tree  containing the star $\St(l_1,l_2,\cdots,l_r)$ with $r\geq 2$ and $1\leq l_1\leq l_2\leq\cdots\leq l_r$ as a subtree. Then,

${\rm (1)}$ for $k=1,2,\cdots,r-1$
\begin{equation*}
p_k\left(\frac{1}{\sigma_2(\wt G)},\frac{1}{\sigma_3(\wt G)},\cdots,\frac{1}{\sigma_r(\wt G)}\right)\geq\frac{r-k}{r}p_{k}(l_1,l_2,\cdots,l_r)
\end{equation*}
and
\begin{equation*}
p_k\left(\sigma_2(\wt G),\sigma_3(\wt G),\cdots,\sigma_r(\wt G)\right)\leq \frac{(k+1)p_{r-k-1}(l_1,l_2,\cdots,l_r)}{p_{r-1}(l_1,l_2,\cdots,l_r)};
\end{equation*}

${\rm (2)}$ when $l_1,l_2,\cdots,l_r$ do not satisfy \eqref{eq-Z-nonempty}, the equality holds for some inequalities in (1) if and only if $\wt G=\St(l_1,l_2,\cdots,l_r)$;

${\rm (3)}$ when $l_1,l_2,\cdots, l_r$ satisfy \eqref{eq-Z-nonempty} and the equality holds for some inequalities in (1), $\wt G=\St(l_1,l_2,\cdots,l_r)\vee_z\wt G_z$ with $\lambda_1(\wt G_z,L(\wt G_z)\setminus\{z\},\{z\})\geq \frac{1}{l_1}$. Here $z$ is vertex in $Z$ as described in (2) of Lemma \ref{lem-star};

${\rm (4)}$ $\sigma_2(\wt G)\leq \frac{r}{l_1+l_2+\cdots l_r}$. Moreover, when $r\geq 3$, the equality holds if and only $l_1=l_2=\cdots=l_r$ and $\wt G=\St(r;l_1)\vee_o\wt G_o$ with $\lambda_1(\wt G_o,L(\wt G_o)\setminus\{o\},\{o\})\geq \frac{1}{l_1}$. Here $o$ is the center of the star.
\end{thm}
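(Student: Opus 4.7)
My plan is to reduce the entire statement to the monotonicity Theorem \ref{thm-main} combined with the explicit spectral description of a star in Lemma \ref{lem-star}. Write $G := \St(l_1,\ldots,l_r)$, so $|B(G)| = r$ and Theorem \ref{thm-main} gives $\sigma_i(\wt G) \leq \sigma_i(G)$ for $i=1,\ldots,r$. Applying Vieta's formulas to the polynomial $P(t)$ of Lemma \ref{lem-star}(1) (monic up to the factor $r$, with coefficients $(-1)^j(r-j)p_j(l_1,\ldots,l_r)$ on $t^{r-1-j}$) I expect the two exact identities
\begin{equation*}
p_k\!\left(\tfrac{1}{\sigma_2(G)},\ldots,\tfrac{1}{\sigma_r(G)}\right) = \tfrac{r-k}{r}\,p_k(l_1,\ldots,l_r), \qquad p_k\!\left(\sigma_2(G),\ldots,\sigma_r(G)\right) = \tfrac{(k+1)\,p_{r-k-1}(l)}{p_{r-1}(l)}.
\end{equation*}
Since each $p_k$ is coordinatewise strictly monotone on the positive orthant, and since $1/\sigma_i$ reverses the inequality $\sigma_i(\wt G) \leq \sigma_i(G)$, the monotonicity of eigenvalues immediately delivers both inequalities of (1).

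For (2) and (3), strict monotonicity of $p_k$ forces any equality in (1) to entail $\sigma_i(\wt G) = \sigma_i(G)$ for \emph{all} $i=1,\ldots,|B(G)|$. When $l_1,\ldots,l_r$ do not satisfy \eqref{eq-Z-nonempty}, Lemma \ref{lem-star}(2) gives $Z = \emptyset$; Corollary \ref{cor-rigidity-tree}(1) then forces $\wt G = G$, which proves (2). When \eqref{eq-Z-nonempty} holds, Lemma \ref{lem-star}(2) identifies $Z = \{z\}$ for the prescribed vertex $z$ and records $\sigma_{|L(G)|}(G) = \sigma_r(G) = 1/l_1$. Corollary \ref{cor-rigidity-tree}(1) then collapses every tooth except possibly $\wt G_z$, giving $\wt G = \St(l_1,\ldots,l_r) \vee_z \wt G_z$, and Corollary \ref{cor-rigidity-tree}(2) supplies $\lambda_1(\wt G_z, L(\wt G_z)\setminus\{z\}, \{z\}) \geq 1/l_1$; this is (3).

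For (4), applying (1) with $k=1$ to $\wt G$ gives $\sum_{i=2}^r 1/\sigma_i(\wt G) \geq \tfrac{r-1}{r}(l_1+\cdots+l_r)$, and since $1/\sigma_2(\wt G)$ is the largest of these summands, averaging yields $\sigma_2(\wt G) \leq r/(l_1+\cdots+l_r)$. For the rigidity when $r \geq 3$, equality requires both equality in (1) with $k=1$ (so $\sigma_i(\wt G) = \sigma_i(G)$ for all $i$) and all the $1/\sigma_i(\wt G)$ equal (so $\sigma_2(G) = \sigma_3(G) = \cdots = \sigma_r(G)$). Substituting this common value into the identities above for $p_1$ and $p_2$ forces $\sum l_i^2 = (\sum l_i)^2/r$, whose Cauchy-Schwarz equality case yields $l_1 = \cdots = l_r$. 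Thus $G = \St(r;l_1)$ is a regular star; by Lemma \ref{lem-regular-star} the eigenfunctions $\{f_j\}_{j=2}^r$ spanning the $\sigma_2 = 1/l_1$ eigenspace have the center $o$ as their only common zero, so $Z_1 = \{o\} \subset \Omega$. Corollary \ref{cor-rigidity-tree}(3), applied with $\sigma_2(\wt G) = \sigma_2(G) = 1/l_1$, then concludes $\wt G = \St(r;l_1) \vee_o \wt G_o$ with $\lambda_1(\wt G_o, L(\wt G_o)\setminus\{o\}, \{o\}) \geq 1/l_1$; the converse follows from Corollary \ref{cor-rigidity-tree}(4). The main obstacle is this Cauchy-Schwarz / power-sum step, where one must recognize that equality of all positive Steklov eigenvalues of a star is possible only for a regular star.
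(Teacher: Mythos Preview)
Your proposal is correct and follows essentially the same route as the paper. Both combine the monotonicity theorem with Vieta's formulas applied to the polynomial $P(t)$ of Lemma \ref{lem-star}(1), then invoke the tree rigidity Corollary \ref{cor-rigidity-tree} (the paper routes (4) through Theorem \ref{thm-reg-star}, which itself rests on Corollary \ref{cor-rigidity-tree}(3),(4)); your Cauchy--Schwarz phrasing of the step forcing $l_1=\cdots=l_r$ is exactly the paper's identity $(r-1)\bigl(\sum l_i\bigr)^2-2r\sum_{i<j}l_il_j=\sum_{i<j}(l_i-l_j)^2$ rewritten.
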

\begin{proof}
Let $\sigma_2\leq \sigma_3\leq \cdots\leq \sigma_r$ be the positive Steklov eigenvalues of $\St(l_1,l_2,\cdots,l_r)$.

 (1) By the monotonicity of Steklov eigenvalues, $\sigma_i(\wt G)\leq \sigma_i$ for $i=2,3,\cdots, r$. Then, by Lemma \ref{lem-star},
\begin{equation*}
p_k\left(\frac{1}{\sigma_2(\wt G)},\frac{1}{\sigma_3(\wt G)},\cdots,\frac{1}{\sigma_r(\wt G)}\right)\geq p_k\left(\frac{1}{\sigma_2},\frac{1}{\sigma_3},\cdots,\frac{1}{\sigma_r}\right)=\frac{r-k}{r}p_{k}(l_1,l_2,\cdots,l_r)
\end{equation*}
for $k=1,2,\cdots, r-1$. The other inequalities can be obtained similarly.

(2) If the equality holds for some inequalities in (1), then $\sigma_i(\wt G)=\sigma_i$ for $i=1,2,\cdots, r$. Moreover, in the case that $l_1,l_2,\cdots, l_r$ do not satisfy \eqref{eq-Z-nonempty}, $Z=\emptyset$ by Lemma \ref{lem-star}. So, by (1) in Corollary \ref{cor-rigidity-tree}, $\wt G=\St(l_1,l_2,\cdots,l_r)$.

(3) It follows from (2) of Corollary \ref{cor-rigidity-tree} and Lemma \ref{lem-star}.

(4) By (1), we have
\begin{equation}
\frac{r-1}{\sigma_2(\wt G)}\geq \sum_{i=2}^r\frac{1}{\sigma_i(\wt G)}\geq \frac{r-1}{r}(l_1+l_2+\cdots+l_r).
\end{equation}
Thus $$\sigma_2(\wt G)\leq\frac{r}{l_1+l_2+\cdots+l_r}.$$

Moreover, when $r\geq 3$, if the equality of the inequality holds, then
\begin{equation}
\sigma_2(\wt G)=\sigma_{3}(\wt G)=\cdots=\sigma_{r}(\wt G)=\sigma_2=\sigma_3=\cdots=\sigma_r=\frac{r}{\sum_{i=1}^rl_i}.
\end{equation}
Then, by Lemma \ref{lem-star},
\begin{equation}
\begin{split}
\frac{(r-1)(r-2)}{2}\left(\frac{\sum_{i=1}^rl_i}{r}\right)^2=\sum_{2\leq i<j\leq r}\frac{1}{\sigma_i\sigma_j}=\frac{r-2}{r}\sum_{1\leq i<j\leq r}l_il_j
\end{split}
\end{equation}
Thus
\begin{equation}
0=(r-1)\left(\sum_{i=1}^rl_i\right)^2-2r\sum_{1\leq i<j\leq r}l_il_j=\sum_{1\leq i<j\leq r}(l_i-l_j)^2.
\end{equation}
Hence $l_1=l_2=\cdots=l_r$. The other conclusion then follow from Theorem \ref{thm-reg-star}.
\end{proof}
We next recall the definition of a regular comb (See also \cite{FR}).
\begin{defn}
A comb over a path of length $r$ such that each tooth is a path of length $l$ with one of the end vertices of the tooth on the base is called a regular comb with $r+1$ teeth of length $l$ and is denoted as $\Comb(r;l)$.
\end{defn}
\begin{lem}\label{lem-comb}
Let $0=\mu_1<\mu_2\leq\cdots\leq\mu_{r+1}$ be the Laplacian eigenvalues of the path of length $r$. Then, for any $l\geq 1$, $$\sigma_i(\Comb(r;l))=\frac{\mu_i}{1+\mu_il}$$
for $i=1,2,\cdots, r+1$. Moreover, suppose the base of $\Comb(r;l)$ is the path $P$: $v_{0,0}\sim v_{0,1}\sim\cdots\sim v_{0,r}$, and the tooth on $v_{0,i}$ is the path $T_i$: $v_{0,i}\sim v_{1,i}\sim \cdots \sim v_{l,i}$ for $i=0,1,2,\cdots, r$. Then, the eigenfunction for $\sigma_i(\Comb(r;l))$ is
\begin{equation}
f_i(v_{j,k})=\varphi_i(v_{0,k})(1+j\mu_i)
\end{equation}
for $j=0,1,\cdots, l$ and $k=0,1,\cdots,r$, where $\varphi_i$ is the Laplacian eigenfunction on $P$ for $\mu_i$. In fact, $\mu_i=2-2\cos\frac{(i-1)\pi}{r+1}$, and $\varphi_i(v_{0,j})=\cos\frac{(i-1)(2j+1)\pi}{2(r+1)}$ for $j=0,1,\cdots,r$ and $i=1,2,\cdots,r+1$.
\end{lem}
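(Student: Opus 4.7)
The plan is a direct verification: I will show that the functions $f_i$ displayed in the statement are Steklov eigenfunctions of $\Comb(r;l)$ with eigenvalues $\mu_i/(1+\mu_i l)$, and then use the monotonicity of $t\mapsto t/(1+lt)$ together with a dimension count to conclude that these are all the Steklov eigenvalues in the correct order. The boundary of $\Comb(r;l)$ consists exactly of the $r+1$ leaves $v_{l,k}$, so there are $|B|=r+1$ Steklov eigenvalues to account for, matching the $r+1$ Laplacian eigenvalues of $P$.

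First I check that each $f_i$ is harmonic in the interior of $\Comb(r;l)$. On any tooth $T_k$, the assignment $j\mapsto\varphi_i(v_{0,k})(1+j\mu_i)$ is affine in $j$, so $\Delta f_i(v_{j,k})=0$ automatically for $1\le j\le l-1$. At a base vertex $v_{0,k}$, the neighbors in $\Comb(r;l)$ are its neighbors in $P$ together with $v_{1,k}$, and $f_i(v_{1,k})-f_i(v_{0,k})=\mu_i\varphi_i(v_{0,k})$, so
\begin{equation*}
\Delta f_i(v_{0,k}) \;=\; \Delta_P\varphi_i(v_{0,k}) + \mu_i\varphi_i(v_{0,k}).
\end{equation*}
The identity $\Delta_P\varphi_i = -\mu_i\varphi_i$ on all of $V(P)$ (interpreting $P$ as a closed graph, so the endpoints $v_{0,0},v_{0,r}$ are covered by $\Delta_P\varphi_i(v_{0,0})=\varphi_i(v_{0,1})-\varphi_i(v_{0,0})$ and symmetrically at $v_{0,r}$) then yields $\Delta f_i(v_{0,k})=0$ throughout the base.

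Next I verify the Steklov condition at each leaf $v_{l,k}$. By \eqref{eq-normal-derivative},
\begin{equation*}
\frac{\p f_i}{\p n}(v_{l,k}) \;=\; f_i(v_{l,k}) - f_i(v_{l-1,k}) \;=\; \mu_i\varphi_i(v_{0,k}),
\end{equation*}
while $f_i(v_{l,k})=\varphi_i(v_{0,k})(1+l\mu_i)$, so $\p f_i/\p n=\sigma_i f_i$ on $B$ precisely when $\sigma_i=\mu_i/(1+l\mu_i)$. Because $\varphi_1,\ldots,\varphi_{r+1}$ are linearly independent on $V(P)$, the traces $f_1|_B,\ldots,f_{r+1}|_B$ are linearly independent in $\R^B$, so the $(r+1)$-dimensional Steklov spectrum is exhausted. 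The ordering $0=\sigma_1<\sigma_2\le\cdots\le\sigma_{r+1}$ follows from the strict monotonicity of $t\mapsto t/(1+lt)$ on $[0,\infty)$. Finally, the closed forms $\mu_i=2-2\cos\frac{(i-1)\pi}{r+1}$ and $\varphi_i(v_{0,j})=\cos\frac{(i-1)(2j+1)\pi}{2(r+1)}$ are the classical Neumann Laplacian data on a path of $r+1$ vertices; the interior check reduces to the sum-to-product identity, while the required endpoint identity $\cos\frac{3(i-1)\pi}{2(r+1)}-\cos\frac{(i-1)\pi}{2(r+1)} = -\mu_i\cos\frac{(i-1)\pi}{2(r+1)}$ is a direct trigonometric computation. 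No serious obstacle is anticipated; the only mildly subtle bookkeeping is ensuring that the base-endpoint vertices $v_{0,0}$ and $v_{0,r}$ are handled uniformly by the same harmonicity identity.
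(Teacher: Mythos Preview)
Your proposal is correct and follows essentially the same approach as the paper: a direct verification that each $f_i$ is harmonic in the interior and satisfies the Steklov boundary condition with eigenvalue $\mu_i/(1+\mu_i l)$. You are in fact slightly more thorough than the paper's proof, which stops after the verification; your added remarks on linear independence of the boundary traces and the monotonicity of $t\mapsto t/(1+lt)$ make explicit why these $r+1$ values exhaust the Steklov spectrum in the stated order.
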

\begin{proof}
Let $\Delta$ be Laplacian operator on $\Comb(r;l)$. Then,
\begin{equation}
\begin{split}
\Delta f_i(v_{0,0})=&(f_i(v_{0,1})-f_i(v_{0,0}))+(f_i(v_{1,0})-f_i(v_{0,0}))\\
=&(\varphi_i(v_{0,1})-\varphi_i(v_{0,0}))+\mu_i\varphi_i(v_{0,0})\\
=&\Delta_P\varphi_i(v_{0,0})+\mu_i\varphi_i(v_{0,0})\\
=&0,
\end{split}
\end{equation}

\begin{equation}
\begin{split}
\Delta f_i(v_{0,j})=&(f_i(v_{0,j+1})-f_i(v_{0,j}))+(f_i(v_{1,j})-f_i(v_{0,j}))+(f_i(v_{0,j-1})-f_i(v_{0,j}))\\
=&\Delta_P\varphi_i(v_{0,j})+\mu_i\varphi_i(v_{0,j})\\
=&0
\end{split}
\end{equation}
for $j=2,3,\cdots, r-1$, and similarly,
\begin{equation}
\Delta f_i(v_{0,r})=0.
\end{equation}
It is also clear that $\Delta f_i(v_{j,k})=0$ for $j=1,2,\cdots,l-1$ and $k=0,1,\cdots,r$. It is then straightforward to verify that
\begin{equation}
\frac{\p f_i}{\p n}(v_{l,k})=\frac{\mu_i}{1+l\mu_i}f_i(v_{l,k})
\end{equation}
for $k=0,1,\cdots,r$. This completes the proof of the lemma.  For the eigenvalues and eigenfunctions for a path, see \cite[P. 9]{BR}.
\end{proof}
By combining Lemma \ref{lem-comb} and the monotonicity of Steklov eigenvalues, we have the following estimates for trees containing a regular comb as a subtree.
\begin{thm}
Let $\wt G$ be a tree containing the regular comb $\Comb(r;l)$ with $r,l\geq 1$ as a subtree. Then,
\begin{equation}
\sigma_i(\wt G)\leq \frac{4\sin^2\frac{(i-1)\pi}{2(r+1)}}{1+4l\sin^2\frac{(i-1)\pi}{2(r+1)}}
\end{equation}
for $i=1,2\cdots, r+1$. Moreover, the equality holds for all $i=1,2,\cdots, r+1$ if and only if $\wt G=\Comb(r;l)$.
\end{thm}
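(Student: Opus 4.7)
The plan is to combine the monotonicity of Theorem \ref{thm-main} with the explicit spectral data for the regular comb supplied by Lemma \ref{lem-comb}, and then extract the rigidity from part (1) of Corollary \ref{cor-rigidity-tree}.

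First, since $\wt G$ is a finite combinatorial tree and $\Comb(r;l)$ is a subtree of $\wt G$, Theorem \ref{thm-main} (whose boundary condition \eqref{eq-assumption-boundary} is automatic on combinatorial trees, as already noted just before Corollary 1.2) yields $\sigma_i(\wt G)\leq\sigma_i(\Comb(r;l))$ for every $i=1,\ldots,r+1$. Lemma \ref{lem-comb} identifies $\sigma_i(\Comb(r;l))=\mu_i/(1+l\mu_i)$, where $\mu_i$ is the $i$-th Laplacian eigenvalue of the path of length $r$; the explicit value $\mu_i=2-2\cos\frac{(i-1)\pi}{r+1}=4\sin^2\frac{(i-1)\pi}{2(r+1)}$ and the monotonicity of $t\mapsto t/(1+lt)$ on $[0,\infty)$ then deliver the claimed upper bound.

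For the rigidity, the ``if'' direction is immediate. For the ``only if'' direction, I will invoke the ``In particular'' clause of part (1) of Corollary \ref{cor-rigidity-tree}: it suffices to verify that the set $Z$ attached to $G=\Comb(r;l)$ is empty, since then equality for all $i$ forces $\wt G=G$. The Steklov eigenfunctions $f_2,\ldots,f_{r+1}$ from Lemma \ref{lem-comb} restrict to an orthogonal basis of the mean-zero subspace of $\R^{L(G)}$; every mean-zero harmonic extension is therefore a linear combination of them, so an interior vertex $x$ lies in $Z$ exactly when $f_i(x)=0$ for all $i=2,\ldots,r+1$. Writing $x=v_{j,k}$ with $0\leq j\leq l-1$ and $0\leq k\leq r$, the explicit formula $f_i(v_{j,k})=\varphi_i(v_{0,k})(1+j\mu_i)$ together with $1+j\mu_i>0$ reduces the condition to $\varphi_i(v_{0,k})=0$ for all $i\geq 2$. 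Expanding $\delta_{v_{0,k}}$ in the orthogonal basis $\{\varphi_i\}_{i=1}^{r+1}$ of $\R^{\{v_{0,0},\ldots,v_{0,r}\}}$, this would make $\delta_{v_{0,k}}$ a scalar multiple of the constant function $\varphi_1$, which is impossible since $r+1\geq 2$. Hence $Z=\emptyset$.

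The only nontrivial step I anticipate is the verification $Z=\emptyset$, which ultimately rests on the fact that the Laplacian eigenbasis of the base path separates points. Once this is in hand, the theorem follows by direct combination of Theorem \ref{thm-main}, Lemma \ref{lem-comb}, and Corollary \ref{cor-rigidity-tree}(1).
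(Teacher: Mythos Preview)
Your proposal is correct and follows exactly the paper's route: the inequality from monotonicity (Theorem \ref{thm-main}) plus Lemma \ref{lem-comb}, and the rigidity from the ``In particular'' clause of Corollary \ref{cor-rigidity-tree}(1) once $Z=\emptyset$. The paper simply asserts $Z=\emptyset$ without argument, whereas you supply a clean verification via the path eigenbasis; your added remark on the monotonicity of $t\mapsto t/(1+lt)$ is harmless but not actually needed.
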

\begin{proof} The inequality comes from monotonicity of Steklov eigenvalues and Lemma \ref{lem-comb}. The rigidity comes from the fact that $Z=\emptyset$ in this case and (1) of Corollary \ref{cor-rigidity-tree}.
\end{proof}
\begin{rem}
Because a star of degree two and a regular comb with two teeth are both paths, the estimates above for trees containing a star or a regular comb as a subtree can be viewed as generalizations of the isodiametric estimate in He-Hua \cite{He-Hua1}.
\end{rem}
Finally, we consider trees contained in or containing a ball of a homogeneous tree. We use $T(r,d)$ to denote the induced subgraph of the homogeneous tree of degree $d$ on a ball of radius $r$. Moreover, for a graph $G$ and $x\in V(G)$, we denote the set of vertices in $V(G)$ with distance $r$ to $x$ as $S_x(r)$. We first compute the Steklov spectrum of $T(r,d)$.
\begin{lem}\label{lem-tree}
Let $r\geq 1$ and $d\geq 3$. Then, $T(r,d)$ has $r+1$ distinct Steklov eigenvalues:
$$0=\ol\sigma_1<\ol\sigma_2<\cdots<\ol\sigma_{r+1}$$
where
\begin{equation}
\ol\sigma_k=\frac{1}{\sum_{i=0}^{r+1-k}(d-1)^{i}}=\frac{d-2}{(d-1)^{r+2-k}-1}
\end{equation}
for $k=2,3,\cdots, r+1$. Moreover, when $k\geq 3$, the eigenspace of $\ol\sigma_k$ is of dimension $(d-2)d(d-1)^{k-3}$ and generated by the following eigenfunctions: Let $x\in S_o(k-2)$, $y_1,y_2,\cdots,y_{d-1}\in S_o(k-1)\cap S_x(1)$, for each $\alpha=2,\cdots,d-1$, define
\begin{equation*}
f_{xy_1y_\alpha}(v)=\left\{\begin{array}{ll}\frac{\sum_{i=r+1-k-j}^{r+1-k}(d-1)^i}{\sum_{i=0}^{r+1-k}(d-1)^i}& v\in S_{y_1}(j)\cap S_o(j+k-1),j=0,1,\cdots,r-k+1\\
0&v=x\\
-\frac{\sum_{i=r+1-k-j}^{r+1-k}(d-1)^i}{\sum_{i=0}^{r+1-k}(d-1)^i}& v\in S_{y_\alpha}(j)\cap S_o(j+k-1), j=0,1,\cdots,r-k+1\\
0&{\mbox{otherwise}}
\end{array}\right.
\end{equation*}
The eigenspace of $\ol \sigma_2$ is of dimension $d-1$ and generated by the following eigenfunctions: Let $x_1,x_2,\cdots x_d\in S_o(1)$, for each $\alpha=2,3,\cdots, d$, define
\begin{equation*}
f_{x_1x_\alpha}(v)=\left\{\begin{array}{ll}\frac{\sum_{i=r-1-j}^{r-1}(d-1)^i}{\sum_{i=0}^{r-1}(d-1)^i}& v\in S_{x_1}(j)\cap S_o(j+1),j=0,1,\cdots,r-1\\
0&v=o\\
-\frac{\sum_{i=r-1-j}^{r-1}(d-1)^i}{\sum_{i=0}^{r-1}(d-1)^i}& v\in S_{x_\alpha}(j)\cap S_o(j+1),j=0,1,\cdots,r-1\\
0&{\mbox{otherwise.}}
\end{array}\right.
\end{equation*}
Here $o$ is the root of $T(r,d)$.
\end{lem}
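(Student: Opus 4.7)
The plan is to verify by direct computation that each function listed in the lemma is a Steklov eigenfunction with the claimed eigenvalue, and then to establish the multiplicities by a dimension count. The key structural observation is that each $f_{xy_1y_\alpha}$ is \emph{antisymmetric-radial}: it is supported on the union $T_{y_1}\cup T_{y_\alpha}$ of the two subtrees hanging off the specified children of $x$, depends only on the distance from the subtree root within each side, and carries opposite signs on the two sides. This reduces the verification to a single one-variable problem on $T_{y_1}$.

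Writing $a_j$ for the value of $f := f_{xy_1y_\alpha}$ at distance $j$ from $y_1$ (so $a_j$ is exactly the fraction displayed in the lemma), harmonicity at an interior vertex of $T(r,d)$ at depth $j\in\{1,\ldots,r-k\}$ from $y_1$ reads $d\,a_j = a_{j-1}+(d-1)a_{j+1}$, while at $y_1$ itself (whose parent $x$ has value $0$) it reads $d\,a_0=(d-1)a_1$, and at $x$ itself the neighbor contributions $+a_0$ from $y_1$ and $-a_0$ from $y_\alpha$ cancel against the zero values on the remaining neighbors. All three reduce, via the single geometric-series identity
\begin{equation*}
(d-2)\sum_{i=0}^{j}(d-1)^i = (d-1)^{j+1}-1,
\end{equation*}
to algebraic cancellation. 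At a leaf $v$ below $y_1$, a direct calculation gives $f(v)-a_{r-k} = 1/\sum_{i=0}^{r+1-k}(d-1)^i$, so $\partial f/\partial n(v) = \overline\sigma_k\,f(v)$; antisymmetry handles $T_{y_\alpha}$, and $f\equiv 0$ handles everything outside $T_x$. The edge case $k=r+1$ (where $y_1$ is itself a leaf, so the Steklov identity is just $f(y_1)-f(x)=\overline\sigma_{r+1}\cdot 1 = 1$) and the case $k=2$ (where the center $o$ replaces $x$ and has $d$ rather than $d-1$ children) follow by the same argument with only the obvious arithmetic modifications.

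For the dimension count, the boundary restriction of $f_{xy_1y_\alpha}$ is the antisymmetric indicator $1_{B_{y_1}}-1_{B_{y_\alpha}}$, where $B_y:=B\cap T_y$. For fixed $x$ and $y_1$, letting $\alpha$ range over the remaining $d-2$ children of $x$ yields $d-2$ manifestly independent traces; distinct $x$ at depth $k-2$ have disjoint descendant subtrees, so these contributions stack across $x$. Combined with the simple zero eigenvalue and the $d-1$ functions at $k=2$, this gives
\begin{equation*}
1+(d-1)+\sum_{k=3}^{r+1}(d-2)d(d-1)^{k-3} = d(d-1)^{r-1} = |B|,
\end{equation*}
exhausting the spectrum. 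The strict ordering $\overline\sigma_1<\overline\sigma_2<\cdots<\overline\sigma_{r+1}$ is immediate from the formula $\overline\sigma_k = (d-2)/((d-1)^{r+2-k}-1)$ together with $d-1\geq 2$. The only real obstacle is the bookkeeping between the two depth conventions (distance from $o$ versus from $y_1$) in the harmonicity check; once the geometric-series identity above is on hand, everything else is cancellation.
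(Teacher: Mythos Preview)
Your proposal is correct and is exactly the approach the paper takes: the paper's own proof consists solely of the sentence ``The conclusion of the lemma follows by direct verification,'' and what you have written is precisely that verification carried out in detail, including the harmonicity check on each subtree, the boundary Steklov condition, and the dimension count showing the listed eigenfunctions exhaust $\mathbb{R}^B$.
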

\begin{proof}
The conclusion of the lemma follows by direct verification.
\end{proof}
By applying Lemma \ref{lem-tree} and the monotonicity of Steklov eigenvalues, we have the following results.
\begin{thm}Let $G$ be a nontrivial finite tree.

${\rm (1)}$ If $G$ is a subtree of $T(r,d)$ with $r\geq 1$ and $d\geq 3$, then
\begin{equation*}
\sigma_j(G)\geq \ol\sigma_k=\frac{d-2}{(d-1)^{r+2-k}-1}
\end{equation*}
for $j=2,3,\cdots,|L(G)|$, when $d(d-1)^{k-3}<j\leq d(d-1)^{k-2}$ for $k=2,3,\cdots,r+1$.

${\rm(2)}$ If $\wt G$ contains $T(r,d)$ with $r\geq 1$ and $d\geq 3$ as a subtree,  then
\begin{equation*}
\sigma_j(\wt G)\leq\ol\sigma_k=\frac{d-2}{(d-1)^{r+2-k}-1}
\end{equation*}
for $j=2,3,\cdots, d(d-1)^{r-1}$, when $d(d-1)^{k-3}<j\leq d(d-1)^{k-2}$ for $k=2,3,\cdots,r+1$. Moreover, the equality holds for $j=2$ if and only if $\wt G=T(r,d)\vee_o\wt G_o$ with $$\lambda_1(\wt G_o, L(\wt G_o)\setminus\{o\},\{o\})\geq \ol\sigma_2=\frac{d-2}{(d-1)^{r}-1}.$$
Here $o$ is the root of the tree $T(r,d)$.
\end{thm}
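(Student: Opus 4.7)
The plan is to deduce both parts essentially as formal consequences of Lemma \ref{lem-tree} together with the monotonicity of Steklov eigenvalues (Theorem \ref{thm-main} and its tree corollary), and, for the rigidity assertion at $j=2$, Corollary \ref{cor-rigidity-tree}. The only content beyond direct invocation of these results is identifying the set $Z_1$ for $T(r,d)$ at the first positive eigenvalue $\ol\sigma_2$.

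I would begin by converting the multiplicity data of Lemma \ref{lem-tree} into an ordered enumeration of Steklov eigenvalues: the partial sums $1,\ d,\ d(d-1),\ d(d-1)^2,\ldots, d(d-1)^{r-1}$ telescope, so that the $j$-th Steklov eigenvalue of $T(r,d)$ equals $\ol\sigma_k$ exactly when $d(d-1)^{k-3}<j\leq d(d-1)^{k-2}$. With this in hand, part (1) follows because $G$ is a nontrivial subtree of $T(r,d)$, so the combinatorial-tree corollary of Theorem \ref{thm-main} gives $\sigma_j(T(r,d))\leq \sigma_j(G)$ for all $j\leq |L(G)|$. Part (2) is dual: since $T(r,d)$ is a subtree of $\wt G$, the same corollary gives $\sigma_j(\wt G)\leq \sigma_j(T(r,d))=\ol\sigma_k$, valid up to $|L(T(r,d))|=d(d-1)^{r-1}$.

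For the rigidity at $j=2$, the key geometric fact is $Z_1=\{o\}$, which I would extract directly from the explicit $\ol\sigma_2$-basis of Lemma \ref{lem-tree}. Every basis element $f_{x_1 x_\alpha}$ vanishes at $o$, so $o\in Z_1$. Conversely, for any vertex $v\neq o$, let $x_\beta$ be the neighbor of $o$ on the unique path from $o$ to $v$: if $\beta=1$ then $f_{x_1 x_\alpha}(v)\neq 0$ for any $\alpha\geq 2$, while if $\beta\geq 2$ then $f_{x_1 x_\beta}(v)\neq 0$; so $v\notin Z_1$. Since $d\geq 3$ the root $o$ lies in $\Omega(T(r,d))$, hence $Z_1=\{o\}\subset \Omega$. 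Next I would write $T(r,d)=\vee_o^d T$, where $T$ is the closed arm from $o$ to a leaf, regarded as a tree with boundary $L(T)\setminus\{o\}$ in accordance with Corollary \ref{cor-rigidity-tree}~(4). Corollary \ref{cor-rigidity-tree}~(3) applied with $G=T(r,d)$ then forces $\wt G_x=\{x\}$ for every $x\neq o$ whenever $\sigma_2(\wt G)=\ol\sigma_2$, which is precisely the assertion $\wt G=T(r,d)\vee_o \wt G_o$. Corollary \ref{cor-rigidity-tree}~(4) delivers the iff criterion $\lambda_1(\wt G_o,L(\wt G_o)\setminus\{o\},\{o\})\geq \ol\sigma_2$, using the identification $L(\wt G)\cap \wt G_o=L(\wt G_o)\setminus\{o\}$, which holds because every $v\in \wt G_o\setminus\{o\}$ has the same degree in $\wt G_o$ as in $\wt G$, while $o$ itself is not a leaf of $\wt G$ (it retains $d\geq 3$ neighbors in the $T(r,d)$ part).

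I expect the main (minor) obstacle to be the wedge-sum bookkeeping: the arm $T$ must be treated as a graph with boundary $L(T)\setminus\{o\}$ so that $o$ lies in its interior even though $o$ is a topological leaf of $T$, and one must reconcile the leaf-set notation used in Corollary \ref{cor-rigidity-tree}~(4) with the form stated in the theorem. Once these identifications are in place, each step is a direct citation of previously established results.
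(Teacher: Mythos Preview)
Your proposal is correct and follows essentially the same approach as the paper: the paper's proof simply cites Lemma \ref{lem-tree} plus monotonicity for the inequalities, and for the rigidity at $j=2$ invokes parts (3) and (4) of Corollary \ref{cor-rigidity-tree} together with the observation $Z_1=\{o\}$. You have supplied the details the paper leaves implicit (the partial-sum computation giving the ordered enumeration, the explicit verification $Z_1=\{o\}$ from the $\ol\sigma_2$-eigenbasis, and the leaf-set identification $L(\wt G)\cap\wt G_o=L(\wt G_o)\setminus\{o\}$), but the structure of the argument is identical.
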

\begin{proof}
(1) The conclusion follows directly from Lemma \ref{lem-tree} and monotonicity of Steklov eigenvalues.

(2) The inequality comes from Lemma \ref{lem-tree} and monotonicity of Steklov eigenvalues directly. The rigidity follows directly from (3) and (4) of Corollary \ref{cor-rigidity-tree} and that $Z_1=\{o\}$ by Lemma \ref{lem-tree}. Here $Z_1$ is the same as in Theorem \ref{thm-main-rigidity-2}.
\end{proof}
\begin{rem}
The inequality in (1) of the last theorem for $j=2$ is also mentioned in He-Hua \cite{He-Hua2} where they also obtained the rigidity of the inequality for $j=2$.
\end{rem}

\end{document}